\crefname{algocf}{algorithm}{algorithms}
\Crefname{algocf}{Algorithm}{Algorithms}
\crefname{algorithm}{algorithm}{algorithms}
\Crefname{algorithm}{Algorithm}{Algorithms}
\crefname{lemma}{lemma}{lemmas}
\Crefname{lemma}{Lemma}{Lemmas}
\crefname{theorem}{theorem}{theorems}
\Crefname{theorem}{Theorem}{Theorems}
\newtheorem{example}{Example}
\crefname{conjecture}{conjecture}{conjectures}
\Crefname{conjecture}{Conjecture}{Conjectures}
\crefname{figure}{figure}{figures}
\Crefname{figure}{Figure}{Figures}
\crefname{example}{example}{examples}
\Crefname{example}{Example}{Examples}
\Crefname{ALC@unique}{Line}{Lines}
\DeclareMathOperator{\vct}{span}
\DeclareMathOperator{\VEC}{vec}
\DeclareMathOperator*{\argmin}{argmin}
\def\lineref#1{\hyperref[#1]{line \ref{#1} of \Cref{#1}}}
\crefname{remark}{remark}{remarks}
\Crefname{remark}{Remark}{Remarks}
\title{
  ResQPASS: an algorithm for bounded variable linear least squares with asymptotic Krylov convergence}
\author{Bas Symoens \thanks{Department of Mathematics, Universiteit Antwerpen, Belgium  (\email{bas.symoens@uantwerpen.be})}
 \and Wim Vanroose \thanks{Department of Mathematics, Universiteit Antwerpen, Belgium  (\email{wim.vanroose@uantwerpen.be})}}
\begin{document}
\maketitle

\begin{abstract}
	We present the Residual Quadratic Programming Active-Set
        Subspace (ResQPASS) method that solves large-scale linear
        least-squares problems with bound constraints on the
        variables.  The problem is solved by creating a series of
        small problems of increasing size by projecting onto  the basis
        of residuals.  Each projected problem is solved by the
        active-set method for convex quadratic programming,
        warm-started with a working set and solution from the previous
        problem.  The method coincides with conjugate gradients (CG)
        or, equivalently, LSQR when none of the constraints is active.
        When only a few constraints are active the method converges,
        after a few initial iterations, like CG and LSQR. An analysis
        links the convergence to an asymptotic Krylov subspace.  We
        also present an efficient implementation where QR
        factorizations of the projected problems are updated over the inner
        iterations and Cholesky or Gram-Schmidt over the outer iterations.
\end{abstract}

\begin{keyword}
	Krylov, active-set method for convex quadratic programming, variable-bound least squares, nonnegative least squares
\end{keyword}

\begin{MSCcodes}
	49N45, 65K05, 65F10, 90C20
\end{MSCcodes}

\section{Introduction}
Inverse problems reconstruct an unknown object from measurements based
on a mathematical model that describes the data acquisition
process. They invert the measured signal into information about the
unknown object. Adding prior knowledge to the problem, such as bounds
on the variables, can significantly improve the quality of the
solution.

Many inverse problems, e.g., computed tomography (CT), are formulated
as algebraic reconstruction problems such as linear least squares,
e.g., ${\min_{x\in\mathbb{R}^n}\|Ax-b\|^2_2}$, which minimizes the
mismatch between the data and model. Here, ${A\in\mathbb{R}^{m \times n}}$ models the propagation of X-rays
through the object \cite{kak2001principles}, ${x \in \mathbb{R}^n}$
describes the unknown pixel values, and $b \in \mathbb{R}^m$ is a
vector containing noisy measurements. As $A$ is often ill-conditioned, a
straightforward least-squares solution will be contaminated by the noise
in the measurements \cite{hansen2006deblurring}.  Adding bounds to the
unknown material parameters regularizes the problem.

Similar bounds appear in nonnegative matrix factorization (NMF), where
a matrix is factorized as a product of two nonnegative matrices
\cite{gillis2020nonnegative}.  At each iteration a bounded variable
least squares system needs to be solved, alternating between the two
factors.

Contact problems in mechanics are often modeled as the minimization of
a quadratic energy function, subject to inequalities that represent the
impenetrability of modeled objects \cite{dostal2016scalable}.

The matrices in these problems are often large and sparse. The linear
operators $Av$ and $A^T w$ can both be implemented as sparse
matrix-vector products and their calculation can be performed in
parallel.  This allows the use of matrix-free methods such as Krylov
subspace methods, which rely solely on matrix-vector operations to
find a solution.  However, current Krylov subspace methods are not designed to
deal with bounds on the variables from first principles.   

We propose an efficient solution method for large-scale
\textit{bounded-variable least squares} (BVLS) problem using a
matrix-free subspace method. We solve
\begin{equation}\label{eqn:LS}
	\min\, \frac{1}{2}\|Ax-b\|_2^2 \quad \text{s.t.} \quad {\ell \le x \le u}, 
\end{equation}
where the bounds are vectors $\ell, u \in \mathbb{R}^n$. This BVLS
problem can, for example, be solved by the Stark-Parker algorithm
\cite{stark1995bounded}.  When only nonnegative bounds are present, it
becomes a \textit{nonnegative least squares} (NNLS) problem
\cite{lawson1995solving}. Note that a least squares problem with an
$\ell_1$ regularization term can also be formulated as a nonnegative
quadratic programming (QP) problem \cite{chen2001atomic}.

The result is a constrained quadratic programming (QP) problem with
objective $f(x)\coloneqq \frac{1}{2}\|Ax-b\|_2^2$.

The Karush-Kuhn-Tucker (KKT) optimality conditions
\cite{nocedal1999numerical} for \Cref{eqn:LS} are
\begin{subequations}\label{eqn:kkt}
	\begin{align}
		A^T(Ax-b) - \lambda + \mu & =0 \label{eqn:kkt:stationarity}                                               \\
		\lambda_i(x_i - \ell_i)   & = 0                             &  & i\in\{1,\ldots,n\}\label{eqn:kkt:lambda} \\
		\mu_i(u_i - x_i)          & = 0                             &  & i\in\{1,\ldots,n\} \label{eqn:kkt:mu}    \\
		\ell_i \leq x_i           & \leq u_i                        &  & i\in\{1,\ldots,n\}                       \\
		(\lambda, \mu)            & \geq 0
	\end{align}
\end{subequations}
where $\lambda \in \mathbb{R}^n$ and $\mu\in \mathbb{R}^n$ are vectors
of Lagrange multipliers associated with, the lower and upper bounds.

The state-of-the-art method for solving general large-scale quadratic
programming (QP) or linear programming (LP) problems is the
\textit{interior-point} (IP) method
\cite{nocedal1999numerical,gondzio2012interior}. In each iteration of
this method, a weighted normal equation must be factorized using
Cholesky factorization. A drawback of this approach is that the
factorization cannot be reused in subsequent iterations because of
significant changes in the weights.

In contrast, with an active-set method, the active constraints change
one by one. As a result, the matrices are modified only through
rank-one updates, allowing reuse of the factorization. This often requires many more (but much cheaper) iterations.

By combining active-set and subspace methods, we can take advantage of
iterative subspace methods to handle large-scale problems while also
benefiting from the ability to reuse factorizations. This enables us
to warm-start each iteration from the previous solution.
However, the proposed method is
best suited for applications where the number of variables in the
active set remains small compared to the total number of variables in
the system.

There is a close resemblance to \textit{column generation}
\cite{lubbecke2005selected}, a technique for large-scale LP problems
that exploits the block structure of the matrix. It expresses the
solution as a convex combination of solutions to subproblems. This
results in an iterative method that alternates between solving a
master problem, which finds the coefficients of the convex
combination, and solving a subproblem that expands the vectors for the
convex combination.  Our projected QP problem resembles the master
problem in column generation.

Here, we use a subspace based on the residuals (see \cref{eqn:rk}) of the first equation of KKT
conditions, \Cref{eqn:kkt}, to approximate the solution.  In each iteration, we solve
a small projected problem, similar to the master problem in column
generation.  This projected problem is again a convex QP problem but
now with small, dense matrices, which is easily solved with a refined
version of classical active-set methods for convex quadratic
programming \cite{nocedal1999numerical}. These refinements are
described in \Cref{sec:qr}, and we use the acronym QPAS for this
method.  Since each outer iteration expands the basis with one vector,
we can easily warm-start from the solution of the previous subspace.
The use of residuals as a basis for the expansion is common in Krylov
methods \cite{liesenstrakos}.  Hence, we call the proposed method the
residual quadratic programming active-set subspace method, resulting
in the acronym ResQPASS.

For inverse problems that involve a rectangular matrix $A$,
Golub-Kahan bidiagonalization \cite{golub1965calculating} is used to
generate solutions in the Krylov subspaces,
\begin{equation}
	\mathcal{K}_k(A^TA, A^Tb) = \text{span} \left\{ A^Tb, (A^TA)A^Tb, \ldots, (A^TA)^{k-1}A^Tb \right\}.
\end{equation}

Krylov subspace methods have been extended to include bound constraints before, through the use of Flexible Conjugate Gradient Least Squares (FCGLS), dubbed `box-FCGLS' \cite{gazzola2021flexible}. This method uses a variable preconditioner along with a smart restarting strategy to solve \cref{eqn:LS}. The main difference with our approach is that while ResQPASS borrows many ideas from Krylov subspace methods, the basis that is constructed is not a Krylov subspace as it directly incorporates information about the bounds. Only by the end of the algorithm will the basis expansions approach Krylov-like basis expansions. This step away from true Krylov subspaces allows for more flexibility in constructing a subspace that is optimal for the problem under consideration. 
The proposed method is competitive with existing methods for large-scale problems with only a small subset of active constraints.
There is also literature on enforcing nonnegativity in \cite{nagy2000enforcing, gazzola2017fast}.  

Here, we make the following contributions. We propose a subspace
method that uses the residuals from the KKT conditions \cref{eqn:kkt} 
as a basis.  In each iteration, we solve a small projected convex QP
problem with dense matrices, using only the application of $A$
and $A^T$ on vectors.  When only a few bounds are active at the
solution we observe superlinear convergence.  We explain this behavior
by making the link to Krylov convergence theory. From a certain point
on, the residuals can be written as polynomials of the normal matrix
and the convergence is determined by the Ritz values of the normal
matrix projected onto the sequence of residuals.

In addition, we contribute an efficient implementation that uses
warm-starting as the residual basis is expanded and updates the
factorization of the matrices each iteration.  Additional recurrence
relations provide further improvements in efficiency.

The analysis of the propagation of rounding errors and the final
attainable accuracy is not part of this study and will be the subject
of a future paper.

In \Cref{sec:basis} we derive
the algorithm ResQPASS and prove some properties of the
proposed subspace method.  \Cref{sec:implementation} discusses several ways to speed up
the algorithm, and \Cref{sec:numerical} discusses some synthetic
numerical experiments representative of different application areas. We conclude in \Cref{sec:conclusion}.

%\section{The Residual QPAS Subspace method and its convergence}\label{sec:basis}
\section{Algorithm ResQPASS}\label{sec:basis}
In this section we introduce the methodology and analyse the
convergence.

We propose to solve \Cref{eqn:LS} by iteratively solving a projected
version of the problem.

\begin{definition} \label{def:resqpas}
	The \textit{residual quadratic programming active-set
          subspace} iteration for $A \in \mathbb{R}^{m \times n}$, $b
        \in \mathbb{R}^m$ and $\ell,u \in \mathbb{R}^n$ such that $\ell \leq 0 \leq u$ with associated Lagrange multipliers $\lambda_k,
        \mu_k \in \mathbb{R}^n$, generates a series of approximations
        $\{x_k\}_{k \in \mathbb{N}}$ that solve
	\begin{equation}\label{eqn:xk}
		\begin{aligned}
			x_k = \argmin_{x \in \vct \{r_0,\ldots, r_{k-1}\}} & \|Ax-b\|_2^2  \quad   \text{s.t.}                                                            & \, \ell \leq x \leq u,
		\end{aligned}
	\end{equation}
	where
	\begin{equation}\label{eqn:rk}
		r_k := A^T (Ax_k-b) - \lambda_k +\mu_k.
	\end{equation}
        The feasible initial guess is $x_0=0$, with
        $\lambda_0=\mu_0=0$ and $r_0 :=-A^Tb$.
\end{definition}

\begin{remark}\label{rem:shift}
	The condition $\ell\leq0\leq u$, which ensures feasibility of
        $x_0$, does not imply any restrictions on the problems that
        can be solved. A problem with arbitrary $\ell<u$ can always be
        shifted so that $\ell\leq 0 \leq u$ holds.  This restriction
        is necessary because the active-set method for convex QP
        requires a feasible initial guess (see also
        \Cref{alg:resBasis}).
\end{remark}

A high-level implementation of the algorithm is given in
\Cref{alg:resBasis} and a more detailed implementation in
\Cref{alg:outer}.

Let $V_k \in \mathbb{R}^{n \times k}$ be a basis for
$\text{span}\{r_0,\ldots, r_{k-1}\}$ with $r_k$ defined in
\Cref{eqn:rk} and let $y_k$ be the projection of the approximation
$x_k$ from \Cref{eqn:xk} on this basis (i.e., $x_k=V_ky_k$). The
optimization problem at iteration $k$ can be rewritten as
\begin{equation}\label{eqn:LSproj}
	\begin{aligned}
		\min_{y_k\in\mathbb{R}^k} & \quad\frac{1}{2}\|AV_ky_k-b\|_2^2 \quad \text{s.t.}              & \quad \ell \leq V_ky_k \leq u.
	\end{aligned}
\end{equation}
The KKT conditions for \cref{eqn:LSproj} are
\begin{subequations} \label{eqn:kktP}
	\begin{align}
		V_k^TA^T(AV_ky_k-b) - V_k^T \lambda_k + V_k^T \mu_k & =0 \label{eqn:kktP:stationarity}                                                \\
		(\lambda_k)_i([V_ky_k]_i - \ell_i)                  & = 0                              &  & i\in\{1,\ldots,n\}\label{eqn:kktP:lambda} \\
		(\mu_k)_i(u_i - [V_ky_k]_i)                         & = 0                              &  & i\in\{1,\ldots,n\} \label{eqn:kktP:mu}    \\
		\ell_i \leq [V_ky_k]_i                              & \leq u_i                         &  & i\in\{1,\ldots,n\} \label{eqn:kktP:bound} \\
		(\lambda_k, \mu_k)                                  & \geq 0.
	\end{align}
\end{subequations}
We call the \textit{active set} the set of indices $i\in\mathcal{A}_k
\subset \{1,\ldots,n\}$ where one of the bound constraints
\cref{eqn:kktP:bound} becomes an equality.

As in Krylov-type methods (see \cite{liesenstrakos}), we use residuals
$r_k$ to expand our basis.  For each iteration $k$, the basis of the
subspace is $V_k=\begin{bmatrix}r_0/\|r_0\| & \cdots &
r_{k-1}/\|r_{k-1}\| \end{bmatrix}$.  This choice of residual basis is
natural for Krylov methods such as conjugate gradients (CG). We prove
their pairwise orthogonality in \Cref{thm:ortho}, how this is a
generalization of Krylov in \Cref{thm:generalize}, and how there is
asymptotic Krylov convergence in \Cref{thm:ritzvalue}.

\begin{lemma}[Orthogonality of $r_k$ for optimal solution of projected system]\label{thm:ortho}
	If $y^*_k \in \mathbb{R}^k$ is an optimal solution of
        \Cref{eqn:LSproj} and $x_k=V_ky_k^*$ then the vectors $\{r_k\}_{k \in \{0,1,2, \ldots\}} $
        defined in \Cref{eqn:rk} are pairwise orthogonal.
\end{lemma}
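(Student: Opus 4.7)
The plan is to derive orthogonality directly from the stationarity condition of the projected KKT system \eqref{eqn:kktP}. At iteration $k$, let $y_k^\star$ solve \eqref{eqn:LSproj} with associated multipliers $\lambda_k,\mu_k\ge 0$, and set $x_k=V_ky_k^\star$. Stationarity \eqref{eqn:kktP:stationarity} can be rewritten as
\begin{equation*}
V_k^{\top}\!\left(A^{\top}(AV_ky_k^\star-b)-\lambda_k+\mu_k\right)=V_k^{\top}r_k=0,
\end{equation*}
so $r_k$ is orthogonal to every column of $V_k$. Because $V_k$ was constructed to be a basis of $\vct\{r_0,\ldots,r_{k-1}\}$, this immediately yields $\langle r_k,r_j\rangle=0$ for all $j<k$.

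The next step is to promote this one-sided orthogonality to pairwise orthogonality for all indices. Since the argument above applies at every iteration $k\in\mathbb{N}$, for any pair of indices $i<k$ we may apply it at step $k$ to conclude $r_k\perp r_i$. Combined with the symmetry of the inner product, this gives the desired pairwise orthogonality of the family $\{r_k\}_{k\ge 0}$.

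Two small points that I would verify explicitly before committing to the argument. First, one has to be sure the $\lambda_k,\mu_k$ appearing inside $r_k$ in \eqref{eqn:rk} are indeed the multipliers coming out of the projected KKT system \eqref{eqn:kktP}, so that the identification $V_k^{\top}r_k=0$ is legitimate; this is the convention set in \Cref{def:resqpas}. Second, one should check the base case $k=0$ is vacuous and that, if at some iteration the new residual $r_k$ happens to lie in $\vct\{r_0,\ldots,r_{k-1}\}$, then by the orthogonality just proved $r_k=0$ and the iteration has terminated (so the basis $V_{k+1}$ need not be extended, and pairwise orthogonality is trivially preserved).

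I expect no real obstacle: the statement is essentially a restatement of projected stationarity once the right bookkeeping of the multipliers is in place. The only place where care is required is in confirming that the columns of $V_k$ truly span $\{r_0,\ldots,r_{k-1}\}$ even in the possibly degenerate situation where one of the earlier residuals vanishes, but this is handled by the termination remark above.
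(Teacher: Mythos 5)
Your argument is exactly the paper's own proof: rewrite the projected stationarity condition \eqref{eqn:kktP:stationarity} as $V_k^{\top}r_k=0$, use that $V_k$ spans $\{r_0,\ldots,r_{k-1}\}$, and apply this at every $k$ to obtain pairwise orthogonality. The extra remarks about the multiplier convention and the degenerate/terminating case are sensible bookkeeping that the paper leaves implicit; no gap.
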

\begin{proof}
	If $y^*_k$ is optimal, the KKT conditions \cref{eqn:kktP}
        hold.  The stationarity conditions
        \Cref{eqn:kktP:stationarity} can be rewritten as 
	\begin{equation}
		V_k^T\left(A^T(AV_ky_k-b) - \lambda_k + \mu_k\right)= V_k^Tr_k = 0.
	\end{equation}
	Since $V_k$ is a basis for $\text{span}[r_0, r_1, \ldots
          r_{k-1}]$, this proves the orthogonality of $r_k$ to $V_k$ for any $k>0$ and hence the orthonormality of $V_k$.
\end{proof}

\begin{lemma}[Orthogonality of $r_k$ for feasible solution of projected system]\label{thm:ortho_feasibility}
  If $y_k, \lambda_k, \mu_k$ is a feasible solution of  
  \ref{eqn:kktP:stationarity} then the $\{r_k\}_{k \in \{1,2 \ldots
    \}}$ defined in \Cref{eqn:rk} are pairwise orthogonal.
\end{lemma}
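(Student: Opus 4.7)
The plan is to observe that the proof of \Cref{thm:ortho} uses only the stationarity equation \eqref{eqn:kktP:stationarity} and not the rest of the KKT system (complementarity, primal feasibility of the bounds, or sign conditions on the multipliers). Consequently the argument carries over verbatim to any triple $(y_k, \lambda_k, \mu_k)$ that merely satisfies \eqref{eqn:kktP:stationarity}, which is precisely the hypothesis of the present lemma.

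Concretely, I would substitute $x_k = V_k y_k$ into the definition of $r_k$ in \eqref{eqn:rk}, so that \eqref{eqn:kktP:stationarity} becomes the single linear-algebraic identity
\begin{equation*}
V_k^T r_k \;=\; V_k^T\bigl(A^T(AV_k y_k - b) - \lambda_k + \mu_k\bigr) \;=\; 0.
\end{equation*}
Since the columns of $V_k$ form a basis for $\vct\{r_0,\ldots,r_{k-1}\}$, this immediately yields $r_k \perp r_j$ for all $0 \leq j \leq k-1$. Applying the same conclusion at every index $k \geq 1$ produces the pairwise orthogonality of the sequence $\{r_k\}_{k\geq 1}$ claimed in the statement.

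There is essentially no obstacle; the lemma is a strict weakening of the hypothesis of \Cref{thm:ortho}, obtained by noting that dual feasibility, complementarity, and primal bound feasibility are never invoked in that proof. The only small point requiring care is the index range: the statement starts at $k \geq 1$ rather than $k \geq 0$ because $r_0 = -A^Tb$ is defined prior to any projected system and its orthogonality to later residuals is a consequence of, not an input to, the stationarity argument applied at $k=1$. This relaxation will presumably be used in the implementation, where warm-started intermediate iterates satisfy stationarity on the current working set before achieving full KKT optimality, and one still wishes to guarantee orthogonality of the generated residual basis.
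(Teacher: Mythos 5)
Your proposal is correct and follows exactly the paper's argument: the paper's proof likewise notes that \eqref{eqn:kktP:stationarity} is precisely the orthogonality condition $V_k^T r_k = 0$, so any triple satisfying stationarity alone already yields $r_k \perp \vct\{r_0,\ldots,r_{k-1}\}$. Your additional remarks on the index range and the role of this relaxation for warm-started inner iterations are consistent with the paper's use of the lemma in \Cref{sec:limittingInner}.
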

\begin{proof}
	Again, \Cref{eqn:kktP:stationarity} can be rewritten as an
        orthogonality condition, so a feasible solution of the
        stationarity condition is orthogonal to the previous
        residuals.
\end{proof}

There exists a natural link between the residuals in \Cref{eqn:rk} and
the Krylov subspaces.
\begin{lemma}[Generalization of Krylov subspace]\label{thm:generalize}
	In the unconstrained case, where at every iteration, none of the bounds of
        \Cref{eqn:LSproj} is active, we get
	\begin{equation}\label{eqn:equivKrylov}
		\mathcal{V}_{k+1}\coloneqq\vct\left\{r_0,r_1,\ldots,r_k\right\} \cong \mathcal{K}_{k+1}(A^TA,r_0)\coloneqq \vct\left\{r_0,A^TAr_0,\ldots, \left(A^TA\right)^kr_0\right\},
	\end{equation}
	with $r_k$ defined as in \Cref{eqn:rk}.
\end{lemma}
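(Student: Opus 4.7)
The plan is to prove the equality of spans by induction on $k$, after first reducing the residual formula using the assumption that no bound is active. When constraints are inactive, the strict inequalities $\ell_i < [V_k y_k]_i < u_i$ together with complementary slackness \eqref{eqn:kktP:lambda}--\eqref{eqn:kktP:mu} force $\lambda_k = \mu_k = 0$, so the residual \eqref{eqn:rk} simplifies to $r_k = A^TA x_k - A^T b = A^TA x_k + r_0$.

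Next, I would prove by induction that $\vct\{r_0, \ldots, r_k\} \subseteq \mathcal{K}_{k+1}(A^TA, r_0)$. The base case $k=0$ is trivial. For the inductive step, assume $\vct\{r_0, \ldots, r_{k-1}\} = \mathcal{K}_k(A^TA, r_0)$. Then $x_k \in \vct\{r_0,\ldots,r_{k-1}\}$ by the definition of the ResQPASS iterate, so $x_k$ is a linear combination of $r_0, A^TA r_0, \ldots, (A^TA)^{k-1} r_0$. Multiplying by $A^TA$ and adding $r_0$ shows that $r_k = A^TA x_k + r_0$ lies in $\vct\{r_0, A^TA r_0, \ldots, (A^TA)^k r_0\} = \mathcal{K}_{k+1}(A^TA, r_0)$, which combined with the induction hypothesis yields the claimed inclusion.

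For the reverse inclusion I would use a dimension argument. By \Cref{thm:ortho} the residuals $r_0, \ldots, r_k$ are pairwise orthogonal; assuming the algorithm has not terminated (i.e.\ none of them vanishes), they are linearly independent, hence $\dim \vct\{r_0,\ldots,r_k\} = k+1$. Since $\dim \mathcal{K}_{k+1}(A^TA, r_0) \leq k+1$ by definition and the subspace on the left is contained in it, the two spaces must coincide. If instead $r_j = 0$ for some $j \leq k$, then $x_j$ already solves the unconstrained normal equations $A^TA x = A^T b$, which is also precisely the stopping condition for CG applied in $\mathcal{K}_j(A^TA, r_0)$, and the equality holds degenerately.

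The main obstacle is mostly book-keeping: carefully justifying that $\lambda_k = \mu_k = 0$ from the hypothesis that no bound is active (it follows from complementary slackness, but should be stated explicitly), and handling the degenerate termination case so the dimension argument is not vacuous. The algebraic step relating $r_k$ to the Krylov basis is routine because each multiplication by $A^TA$ shifts the basis by one power, mirroring the standard Krylov recurrence for CG applied to the normal equations.
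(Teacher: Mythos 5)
Your proposal is correct and follows essentially the same route as the paper: use complementary slackness to conclude $\lambda_k=\mu_k=0$, reduce the residual to $r_k = A^TAx_k + r_0$, and induct on $k$ using $x_k\in\vct\{r_0,\ldots,r_{k-1}\}$. Your explicit dimension/orthogonality argument for the reverse inclusion (via \Cref{thm:ortho}) is a small completeness improvement over the paper's proof, which only establishes the forward containment and leaves the equality of the two subspaces implicit.
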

\begin{proof}
	If the problem is unconstrained, there are no Lagrange
        multipliers $\lambda$ and $\mu$.  If none of the constraints
        is active, the complementarity conditions
        \Cref{eqn:kktP:lambda,eqn:kktP:mu} ensure that
        $\lambda=0=\mu$.  Thus our residuals $r_k$ simplify to
	\begin{equation}
		r_k = A^T(AV_ky_k - b).
	\end{equation}
	For $k=1$, the statement is trivial.
	Assume \Cref{eqn:equivKrylov} holds for $k-1$. Then
	\begin{align}
		r_k & = A^TA\begin{bmatrix} r_0 & r_1 & \cdots & r_{k-1} \end{bmatrix}\begin{bmatrix}y_0\\ \vdots\\ y_{k-1}\end{bmatrix} - A^Tb \\
		    & = A^TA\left( y_0r_0 + \cdots + y_{k-1}r_{k-1}\right) + r_0                                                                \\
		    & = r_0 + y_0A^TAr_0 + \cdots + y_{k-1}A^TAr_{k-1}.
	\end{align}
\end{proof}

\begin{example}\label{ex:cg}
	If we now compare our method applied to the unconstrained
        problem $\min_x\|Ax-b\|_2^2$ with the implementation of LSQR
        \cite{paige1982lsqr} in MATLAB (\verb+lsqr+), we notice in
        \Cref{fig:cg} that they converge similarly. This also suggests
        the possibility of preconditioning, see also \Cref{example:contact}. Note that LSQR is
        mathematically equivalent to CG applied to the normal
        equations $A^TAx = A^Tb$, but LSQR is numerically more
        reliable.

        In this example, $A\in\mathbb{R}^{1000\times600}$ is a sparse
        normally distributed random matrix with density 0.04,
        $x^*\in\mathbb{R}^{600}$ with normally distributed entries,
        and $b = Ax^*$.
        % Wim heeft dit uncommented. Ik denk niet dat resQPASS sneller kan zijn dan CG.
        %
        %For sufficiently large $A$, ResQPASS has a
        %faster runtime, see \Cref{ }.
	%(see \Cref{sec:preconditioning}).
\end{example}
\begin{figure}
	\centering
	\includegraphics[scale=0.9]{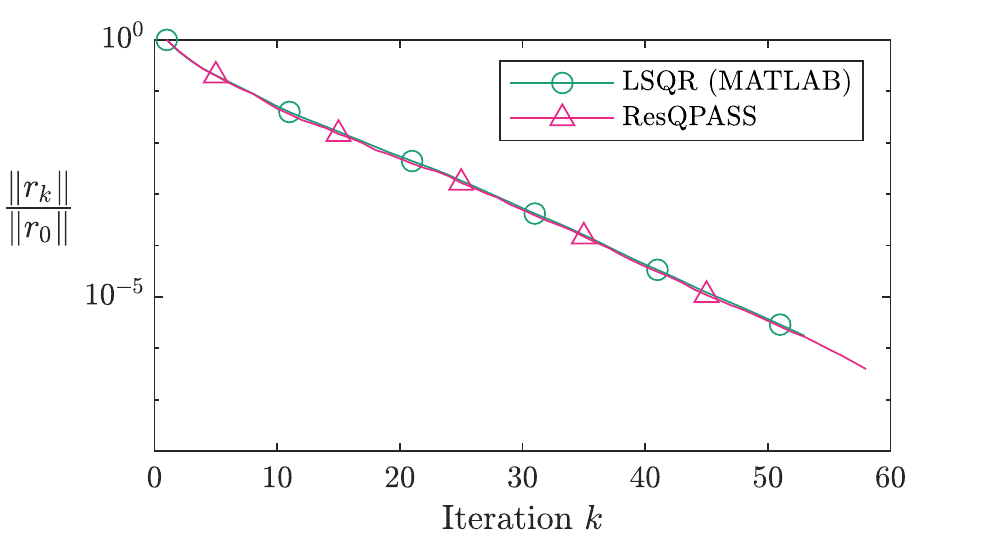}
	\caption{Comparison of convergence of $\frac{\|r_k\|}{\|r_0\|}$ for LSQR and ResQPASS for an
          unconstrained least squares problem.}
	\label{fig:cg}
\end{figure}

\begin{example}\label{ex:tuneable}
	Let us look at an example with a limited number of active
        constraints, where we have control over the maximal number of
        active constraints $i_{\max}$.  Consider the least-squares problem
        $\min\|Ax-b\|_2^2$ with $A\in\mathbb{R}^{1000\times 600}$ a sparse matrix with a density of 4\%. The nonzero elements are all equal to $1$ and are uniformly distributed over the rows and columns of $A$.  
	Let $x^*$ be the unconstrained solution and $b=Ax^*$. Half of the entries of $x^*$ are 0 (uniformly spread over $x^*$) and the other half are $\pm 1$, with equal probability.
	% where $e$ has independent normally distributed entries, with mean 0 and standard deviation $10^{-3}$, i.e. $\mathcal{N}(0,10^{-3})$.
	Let $i_{\max}\leq n$ be the maximal number of active constraints. We then add the following constraints to the problem:
	\begin{equation}
		-\frac{1}{2}\|x^*_i\|-0.01 \leq x_i \leq \frac{1}{2}\|x_i^*\|+0.01 \qquad i\in\{1,\ldots, i_{\max}\}.
	\end{equation}
	The $0.01$ offset ensures that the lower and upper bounds are
        not equal. The experiment is performed for
        $i_{\max}=\{0,1,2,4,8,16,32,64,128\}$. From the results in
        \Cref{fig:maxConstr} we conjecture that the method has two
        steps: discovery of the active set and Krylov
        convergence. Note that the discovery phase takes a number of
        iterations roughly equal to the number of active constraints,
        which in this problem is roughly equal to $i_{\max}$ and to
        the number of active constraints ($i_{\max}\approx$ number of
        active constraints in this example). The number of iterations
        for the Krylov convergence is always about the same as for the
        unconstrained problem (where there is no discovery phase).
	\begin{figure}
		\includegraphics[width=\textwidth]{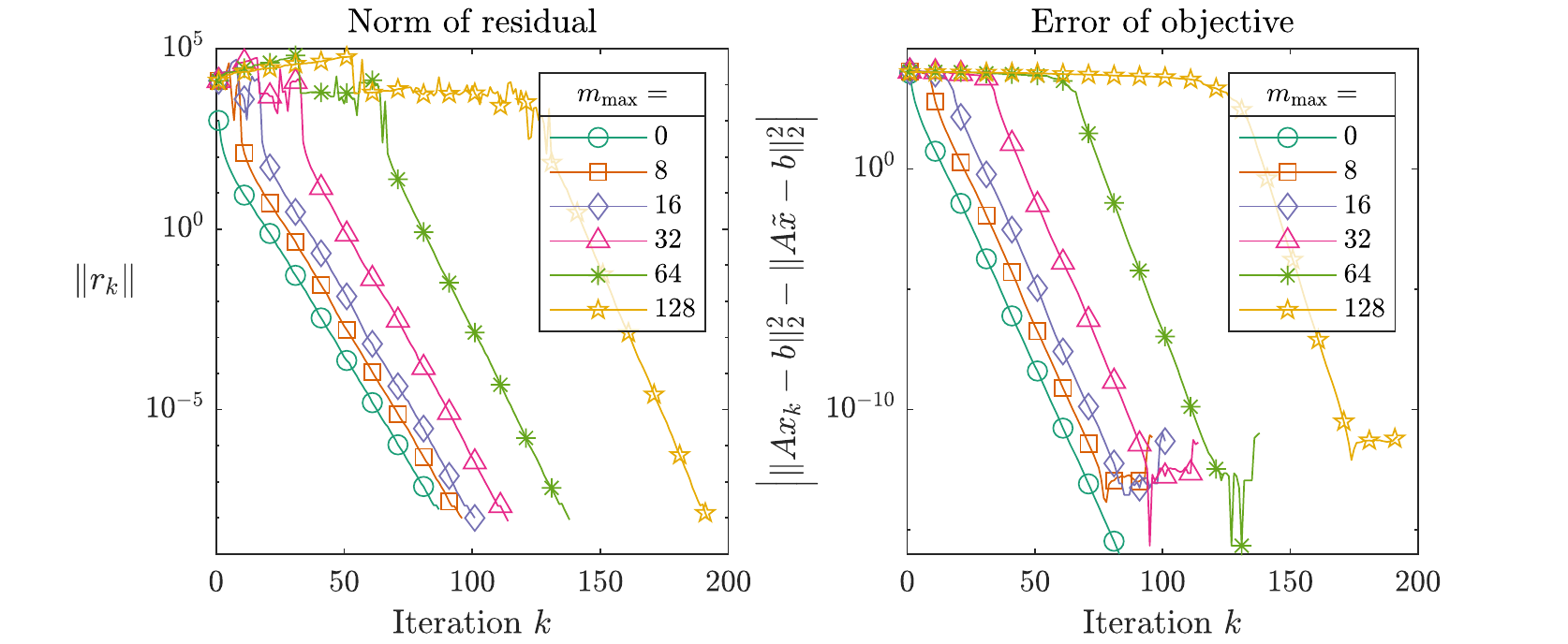}
		\caption{This figure illustrates the convergence
                  behavior for different numbers of active constraints.
                  The residual and objective behave similar to the
                  unbounded ($i_{\max}=0$, Krylov convergence) case,
                  with a delay that is roughly equal to $i_{\max}$,
                  the number of active constraints in the problem. $\tilde{x}$ is an `exact' solution found by applying MATLAB's \texttt{quadprog} with a tolerance of $10^{-15}$.}
		\label{fig:maxConstr}
	\end{figure}

\end{example}

\subsection{Convergence theory}\label{sec:convergence}

\begin{algorithm}
	\caption{Residual quadratic programming active-set subspace  (ResQPASS)}
	\label{alg:resBasis}
	\begin{algorithmic}[1]
		\Require{$A \in \mathbb{R}^{m \times n},b \in \mathbb{R}^m$, $\ell,u \in \mathbb{R}^n, tol>0$}
		\State $r_0 =A^Tb$
		\State $V_1 =r_0/\|r_0\|$
		\State $y_1 = 0$
		\State $\mathcal{W}_1 = \emptyset$
		\For{$k=1,2,\ldots,m$}
		\State $y_k^*, \mathcal{W}_k^*,\lambda_k, \mu_k \gets$ Solve \Cref{eqn:LSproj} using \textproc{qpas}, with initial guess $y_k$ and initial working set $\mathcal{W}_{k}$
		\State $r_k = A^T\left(AV_ky_k^* - b\right)-\lambda_k+\mu_k$
		\If{ $\|r_k\|_2 \le tol$}
		\State $x = V_k y_k,$  break;
		\EndIf
		\State $V_{k+1} \gets \begin{bmatrix} V_k & r_k/\|r_k\| \end{bmatrix}$
		\State $y_{k+1} \gets \begin{bmatrix} (y_k^*)^T & 0 \end{bmatrix}^T$
		\State $\mathcal{W}_{k+1} \gets \mathcal{W}_k^*$
		\EndFor
	\end{algorithmic}
\end{algorithm}
Based in these observations we develop a convergence theory.

%%%%%% LEMMA CONJECTURE %%%%%%%%%
\begin{lemma}\label{thm:conjecture}
	Let $\lambda_k, \mu_k \in \mathbb{R}^n$ be the solution for
        the Lagrange multipliers of the projected KKT conditions \cref{eqn:kktP} for iteration $k$. Let $V_k \in  \mathbb{R}^{n \times k}$ be the subspace generated by the
        residuals $r_k$ as in \Cref{def:resqpas}. Then there exists an iteration $0< k_0 \le n$ such that $-\lambda_k+\mu_k \in \vct([V_{k_0}, A^TAV_{k_0-1}])$ for all iterations $k\geq k_0$.
\end{lemma}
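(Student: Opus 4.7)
The plan is to build on the natural identity obtained by solving the residual definition \cref{eqn:rk} for the Lagrange combination:
\[
-\lambda_k + \mu_k \;=\; r_k - A^TA V_k y_k - r_0.
\]
Since $r_0\in\vct(V_1)$, $x_k = V_k y_k \in \vct(V_k)$, and $r_k$ is the vector used to extend the basis to $V_{k+1}$, this identity alone gives for every $k$ the growing containment $-\lambda_k + \mu_k\in\vct([V_{k+1},A^TAV_k])$. The lemma asserts that this containment stabilises to a fixed subspace, and my strategy has two layers: a finite-termination layer that freezes $k_0$, and a reduction layer that shrinks $V_{k+1},A^TAV_k$ down to $V_{k_0},A^TAV_{k_0-1}$.

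First I would argue that in exact arithmetic the outer loop produces at most $n$ nontrivial residuals. Indeed, by \Cref{thm:ortho} the residuals $r_0,r_1,\ldots$ are pairwise orthogonal in $\mathbb{R}^n$, so there is a smallest index $k_0\le n$ with $r_{k_0}=0$. From that iteration onward the algorithm does not update the basis, the projected solution, or its Lagrange multipliers, so $-\lambda_k+\mu_k=-\lambda_{k_0}+\mu_{k_0}$ for all $k\ge k_0$. Plugging $r_{k_0}=0$ into the identity collapses $V_{k_0+1}$ back to $V_{k_0}$, yielding
\[
-\lambda_{k_0}+\mu_{k_0} \;=\; -A^TA V_{k_0}y_{k_0} - r_0 \;\in\; \vct([V_{k_0},A^TAV_{k_0}]).
\]
It remains to trade the last column of $A^TAV_{k_0}$, namely $A^TAr_{k_0-1}/\|r_{k_0-1}\|$, for vectors inside $\vct([V_{k_0},A^TAV_{k_0-1}])$.

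For this final reduction I would use the identity one step earlier, at $k=k_0-1$. Writing the residual recursion
\[
r_{k_0} \;=\; A^TA V_{k_0-1}\, y_{k_0}^{(1)} + \tfrac{y_{k_0}^{(\mathrm{last})}}{\|r_{k_0-1}\|}\, A^TA r_{k_0-1} + r_0 - \lambda_{k_0}+\mu_{k_0}=0,
\]
I solve for $A^TAr_{k_0-1}$ and observe that, apart from terms already living in $\vct([V_{k_0},A^TAV_{k_0-1}])$, the only obstruction is the very vector $-\lambda_{k_0}+\mu_{k_0}$ whose membership we are trying to establish. This apparent circularity is resolved by feeding the expression back into itself and using that at the base case $k=1$ the Lagrange combination lies in $\vct(V_1)\subseteq\vct([V_{k_0},A^TAV_{k_0-1}])$; an induction on $k$ from $1$ up to $k_0$ then propagates the membership forward, using at each step that a fresh occurrence of $A^TAr_{j-1}$ can be absorbed into $\vct([V_{k_0},A^TAV_{k_0-1}])$ once $-\lambda_j+\mu_j$ already is.

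The main obstacle is exactly this bookkeeping: the one-column reduction from $A^TAV_{k_0}$ to $A^TAV_{k_0-1}$ is not purely algebraic, because the residual identity couples $A^TAr_{k_0-1}$ and $-\lambda_{k_0}+\mu_{k_0}$. Breaking this coupling cleanly requires induction along the entire outer trajectory and careful use of the fact that at termination $r_{k_0}=0$ removes one degree of freedom, which is precisely what allows the claimed subspace to be strictly smaller than the naive $\vct([V_{k_0+1},A^TAV_{k_0}])$.
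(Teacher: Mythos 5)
There is a genuine gap, and it sits exactly where you flag it: the reduction from $\vct([V_{k_0},A^TAV_{k_0}])$ to $\vct([V_{k_0},A^TAV_{k_0-1}])$. Your identity $-\lambda_k+\mu_k = r_k - A^TAV_ky_k - r_0$ is correct and does give the growing containment $\vct([V_{k+1},A^TAV_k])$, but the one extra column you must eliminate is $A^TAv_{k_0}\propto A^TAr_{k_0-1}$, and substituting the recursion $r_{k_0-1}=A^TAV_{k_0-1}y_{k_0-1}+r_0-\lambda_{k_0-1}+\mu_{k_0-1}$ into it produces $(A^TA)^2V_{k_0-1}$ terms that leave the target subspace; each further back-substitution raises the power of $A^TA$ rather than lowering it. The key step you invoke --- that ``a fresh occurrence of $A^TAr_{j-1}$ can be absorbed once $-\lambda_j+\mu_j$ already is'' --- is false: membership of $-\lambda_j+\mu_j$ in a subspace $S$ tells you nothing about $A^TA r_{j-1}\in S$ unless $S$ were $A^TA$-invariant, which $\vct([V_{k_0},A^TAV_{k_0-1}])$ is not. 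So the circularity you acknowledge is not resolved, and even the containment at the single iteration $k=k_0$ is not established.

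The deeper issue is that your choice of $k_0$ (the termination index where $r_{k_0}=0$) misses the mechanism the lemma is actually about, and the ingredient the paper's proof relies on: complementarity forces $-\lambda_k+\mu_k$ to be supported on the active set, whose cardinality is bounded, so the nested spans $\vct\{-\lambda_1+\mu_1,\ldots,-\lambda_k+\mu_k\}$ must saturate at some $k_0$ typically far smaller than $n$. Once linear dependence appears there, the newest multiplier vector is a combination of the \emph{earlier} ones, each of which equals $r_j-A^T(AV_jy_j-b)$ with $j<k$ and hence already lies in the fixed subspace built from $V_{k_0}$ and $A^TAV_{k_0-1}$ --- no elimination of a trailing $A^TA$-column is ever needed. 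Your argument never uses the sparsity of the Lagrange multipliers at all, and a $k_0$ pinned to termination would in any case make the lemma vacuous for the asymptotic Krylov convergence theory it is meant to launch (the range $k\ge k_0$ would be empty). To repair the proof you need the saturation-of-the-multiplier-span argument, not a termination argument.
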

%%%% END LEMMA

%%% BEGIN PROOF %%%%
\begin{proof}
	$V_k$ can maximally grow to $n$ independent vectors and then ${\text{span}\{V_n\} = \mathbb{R}^n}$.
	In that case, the solutions $-\lambda + \mu  \in \mathbb{R}^n$ of \cref{eqn:kkt} are in $\text{span}(V_n)$.

	The number of active constraints is determined by the number of nonzeros
	in $-\lambda + \mu$ and it is usually much smaller then $n$.

	Because we solve \cref{eqn:kktP} with a basis, the number of
        active bounds in $\ell \le V_ky_k \le u$ depends on $k$. 
		Typically, the number of nonzeros in $-\lambda_k+\mu_k$ grows, initially, proportional with $k$.
	We can construct the following subspace of $\mathbb{R}^n$:
	\begin{equation}\label{eqn:spanlagrange}
		\text{span}\left\{-\lambda_1 + \mu_1,-\lambda_2 + \mu_2, \ldots, -\lambda_k + \mu_k \right\},
	\end{equation}
	where the number of nonzeros in each vector increases initially with $k$.  
	  At some point, this space spans the exact active set of the problem, and also $-\lambda^*+\mu^*$, the
        solution of the full problem \cref{eqn:kkt}, can be written
        as a linear combination of the vectors in this subspace.  Once
        these vectors span the space of the active set, this basis
        does not expand anymore and linear dependence appears.

	By construction, see definition of the residual vector
        \cref{eqn:rk}, the vectors in \cref{eqn:spanlagrange} are
        equivalent to the vectors in
	\begin{equation}
		\begin{aligned}
			\text{span}\left\{r_1-A^T(AV_1y_1-b),r_2-A^T(AV_2y_2-b), \ldots, r_k-A^T(AV_ky_k-b)  \right\}.
		\end{aligned}
	\end{equation}
	As soon as linear dependence appears in
        \cref{eqn:spanlagrange}, the last vector $-\lambda_k+\mu_k$
        can be written as a linear combination of the previous
        vectors. This means that there are coefficients $a_j \in
        \mathbb{R}$ such that
	\begin{equation}
		-\lambda_k + \mu_k = r_k-A^TAV_ky_k = \sum_{j=1}^{k-1} (r_j- A^T(AV_jy_j-b))  a_j  \in \text{span}[V_{k}, A^TAV_{k-1}],
	\end{equation}
	where we use that $-A^Tb = V_{k_0}\|A^Tb\| e_1$, since we
        start with $x_0 =0$.

	At the first iteration $k$ where this linear dependence
        appears, set $k_0:=k-1$.  
\end{proof}
%%%%% END PROOF %%%% 

\Cref{thm:conjecture} means that for an iteration $k \ge k_0$ there is an $\alpha^{(k)}\in \mathbb{R}^{k_0}$ and a $\beta^{(k)} \in
\mathbb{R}^{k_0-1} $ such that we can write the residual as
\begin{equation} \label{eq:assumption}
	\begin{aligned}
		r_{k} & =  A^T(AV_k y_k -b) - \lambda_k +\mu_k                                                 \\
		      & = A^T A V_{k} y_k + V_{k_0}\alpha^{(k)} + A^TA V_{k_0-1}\beta^{(k)}                    \\
		      & =A^T A V_{k}(y_k + (\beta^{(k)}, 0)^T) + V_{k_0} \alpha^{(k)} \quad \forall k \ge k_0.
	\end{aligned}
\end{equation}
Here we use again that $A^Tb = V_{k_0}\|A^T b\|e_1$, since we start
with $x_0 =0$. As $\lambda_k$ and $\mu_k$ can still change with the
iteration, the coefficients $\alpha^{(k)}$ and $\beta^{(k)}$ depend on
$k$.

%%%% BEGIN LEMMA  residual kan be written as a polynomials %%%%  
\begin{lemma}\label{lemma:polynomials}
	If $y_k \in \mathbb{R}^k$ and $\lambda_k, \mu_k \in
        \mathbb{R}^n$ are the solution of \cref{eqn:kktP} and the
        iteration $k \ge k_0$ is such that \Cref{thm:conjecture} holds,
        then there are $\gamma_i^{(k)} \in \mathbb{R}^{k_0}$ with $i
        \in \{0, \ldots,k-k_0 \}$ such that the residual is
	\begin{equation}
		r_k  = A^T(AV_k y_k -b) - \lambda_k +\mu_k = \sum_{j=0}^{k-k_0+1} (A^TA)^{j} V_{k_0} \gamma^{(k)}_{j} \quad \forall k \ge k_0.
	\end{equation}
\end{lemma}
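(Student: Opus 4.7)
The plan is to prove the claim by induction on $k \ge k_0$, taking the identity
\[
r_k = A^TA V_k (y_k + (\beta^{(k)}, 0)^T) + V_{k_0} \alpha^{(k)} \qquad (k \ge k_0)
\]
from \eqref{eq:assumption} as the starting point, and then repeatedly substituting the inductive expression for older residuals embedded in $V_k$.

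For the base case $k = k_0$, the right-hand side above already involves only $V_{k_0}$ and $A^TA V_{k_0}$, so setting $\gamma_0^{(k_0)} := \alpha^{(k_0)}$ and $\gamma_1^{(k_0)} := y_{k_0} + (\beta^{(k_0)},0)^T$ immediately gives the stated form with summation bound $k_0 - k_0 + 1 = 1$.

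For the inductive step, assume the claim has been established for every iteration $j$ with $k_0 \le j < k$. Write $V_k = [\,V_{k_0}\,\,\, r_{k_0}/\|r_{k_0}\|\,\, \cdots\,\, r_{k-1}/\|r_{k-1}\|\,]$ and split the coefficient vector $y_k + (\beta^{(k)},0)^T$ into its first $k_0$ entries $z_0 \in \mathbb{R}^{k_0}$ and its remaining entries $\zeta_{k_0}, \ldots, \zeta_{k-1} \in \mathbb{R}$. Then
\[
A^T A V_k (y_k + (\beta^{(k)},0)^T) = A^TA V_{k_0} z_0 + \sum_{j=k_0}^{k-1} \frac{\zeta_j}{\|r_j\|}\, A^TA\, r_j.
\]
Applying the induction hypothesis to each $r_j$ with $k_0 \le j \le k-1$,
\[
A^TA\, r_j = \sum_{i=0}^{j-k_0+1} (A^TA)^{i+1} V_{k_0}\gamma_i^{(j)} = \sum_{i=1}^{j-k_0+2} (A^TA)^i V_{k_0} \gamma_{i-1}^{(j)},
\]
which in the extremal case $j=k-1$ produces a term of degree $k-k_0+1$ in $A^TA$ but no higher. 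Substituting these expansions back, combining with the $V_{k_0}\alpha^{(k)}$ term at degree $0$ and the $A^TA V_{k_0} z_0$ term at degree $1$, and grouping by powers of $A^TA$ acting on $V_{k_0}$ yields coefficients $\gamma_0^{(k)}, \ldots, \gamma_{k-k_0+1}^{(k)} \in \mathbb{R}^{k_0}$ such that
\[
r_k = \sum_{j=0}^{k-k_0+1} (A^TA)^j V_{k_0} \gamma_j^{(k)},
\]
as required.

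The only real difficulty is bookkeeping: one must verify that the highest power of $A^TA$ generated at iteration $k$ is exactly $k-k_0+1$ (obtained only through the previous residual $r_{k-1}$, whose expansion reaches degree $k-k_0$, augmented by one further multiplication by $A^TA$), and that all other contributions stay within this bound. Once the degree accounting is in place, the rearrangement into powers of $A^TA$ applied to $V_{k_0}$ is immediate since each $\gamma_i^{(j)}$ lies in $\mathbb{R}^{k_0}$ by the inductive hypothesis.
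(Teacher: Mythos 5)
Your proof is correct and takes essentially the same route as the paper's: an induction on $k$ anchored at $k=k_0$ via \cref{eq:assumption}, in which each residual column of $V_k$ beyond $V_{k_0}$ is expanded by the inductive hypothesis and the extra factor of $A^TA$ raises the top degree to $k-k_0+1$. The paper only writes out the cases $k_0$ and $k_0+1$ explicitly before asserting the general pattern, so your version simply makes the same bookkeeping explicit.
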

%%%%% END LEMMA %%%%%

%%%%%%BEGIN PROOF %%%% 
\begin{proof}
	Since \Cref{thm:conjecture} holds, the residual
	${r_k =  A^T(AV_k y_k -b) - \lambda_k +\mu_k}$ can be rewritten for iteration $k_0$ as
	\begin{equation}
		\begin{aligned}
			r_{k_0} & = A^TAV_{k_0}y_{k_0} + V_{k_0}\alpha^{(k_0)} + A^TA V_{k_0-1}\beta^{(k_0)}                                                           \\
			        & = A^TA V_{k_0} \left(y_k + (\beta^{(k_0)} 0)^T \right) + V_{k_0-1}\alpha^{(k_0)} = \sum_{j=0}^{1} (A^T A)^j V_{k_0}\gamma^{(k_0)}_j,
		\end{aligned}
	\end{equation}
	where we can use $\gamma^{k_0}_1 = y_{k_0} + (\alpha_2^{(k_0)}
        0)^T $ and $\gamma^{(k_0)}_0 = \alpha_1^{(k_0)}$. So the lemma
        holds for $k=k_0$.
	The residual is added to the basis, which becomes $V_{k_0+1} = [V_k \,\,
          r_{k_0}/\|r_{k_0}\|]$.
	The solution for iteration $k_0+1$ is {$V_{k_0+1}
          y_{k_0+1} = V_{k_0}(y_{k_0+1})_{1:k_0} + r_{k_0}/\|r_{k_0}\|
          (y_{k_0+1})_{k_0+1}$} and can be rewritten as a linear
        combination of the vectors $A^TA V_{k_0}$ and $V_{k_0}$.  The residual for iteration $k_0+1$ is then
	\begin{equation}
		r_{(k_0+1)} =
		(A^TA)^2 V_{k_0} \gamma^{(k_0+1)}_2 + (A^TA) V_{k_0}\gamma^{(k_0+1)}_1 + V_{k_0}\gamma^{(k_0+1)}_0.
	\end{equation}
	Hence for each next iteration $k > k_0$ we have
	\begin{equation}\label{eq:power}
		\begin{aligned}
			r_k & = (A^TA)^{k-k_0+1} V_{k_0} \gamma^{(k)}_{k-k_0+1} +  (A^TA)^{k-k_0} V_{k_0} \gamma^{(k)}_{k-k_0}
			+ \dots + V_{k_0} \gamma^{(k)}_0                                                                      \\
			    & =  \sum_{j=0}^{k-k_0+1}  (A^TA)^{j} V_{k_0} \gamma^{(k)}_{j}.
		\end{aligned}
	\end{equation}
\end{proof}
%%%%%% END PROOF %%%%%%%

We now define the subspace $W_{k_0,l}$ as the space spanned by $l>0$
vectors after the linear dependence appeared in the Lagrange
multipliers for iterations $k_0 < k$, including the vector
$v_{k_0}$. It is
\begin{equation}
	W_{k_0,l}:=\text{span}\left\{v_{k_0},v_{k_0+l-1}, \ldots,   v_{k}\right\} =  \text{span}\left\{r_{k_0-1}, \ldots, r_{k_0 + l -2}\right\},
\end{equation}
where $k = k_0 + l-1$.

We introduce the operator $B_{k_0,l}$ on this subspace
$W_{k_0,l}$. The action of $B_{k_0,l}$ on $W_{k_0,l}$ restricted to
$W_{k_0,l}$ corresponds to the action of $A^TA$ on $W_{k_0,l}$.  The
operator $B_{k_0,l}$ is fully determined by its action on the basis
vectors of $W_{k_0,l}$. But instead of the basis vectors we use the
vectors $(A^TA)^m V_{k_0}\eta $ with $m \in \{1,\ldots, l\}$ and an
arbitrary choice of $\eta \in \mathbb{R}^{k_0}$.

We then have equalities that should hold for any choice of $\eta \in \mathbb{R}^{k_0}$:
\begin{equation}\label{eqn:operator_B}
	\begin{aligned}
		B_{k_0,l} V_{k_0} \eta   & = (A^TA)  V_{k_0}\eta,                         \\
		B^2_{k_0,l} V_{k_0}\eta  & = (A^TA)^2 V_{k_0}\eta,                        \\
		                         & \vdots                                         \\
		B^l_{k_0,l} V_{k_0} \eta & =  W_{k_0,l} W_{k_0,l}^T (A^TA)^l V_{k_0}\eta, \\
	\end{aligned}
\end{equation}
where in the last equality we project again on the subspace
$W_{k_0,l}$.  Since these equalities must hold for any $\eta$, we can
replace them with matrix equalities.  Indeed, since we can choose
$\eta$ to be the unit vectors, we obtain the matrix equalities
\begin{equation}
  \begin{aligned}
    B^m_{k_0,l} V_{k_0}  & = (A^TA)^m V_{k_0},  \quad \forall m \in \{0,\ldots,l-1\},\\
    B^l_{k_0,l} V_{k_0} & =  W_{k_0,l} W_{k_0,l}^T (A^TA)^l V_{k_0}.
  \end{aligned}
\end{equation}
This is similar to the Vorobyev moment problem \cite{liesenstrakos}.

%%%  LEMMA zeros of polynomial %%%%%%
\begin{lemma}
	Let $B_{k_0,l} = W_{k_0,l} \,T_{l,l}\,W_{k_0,l}^T$ with
        $T_{l,l} \in \mathbb{B}^{k_0 \times k_0}$ be the operator defined
        by \cref{eqn:operator_B}.  Let $\{r_0,r_1,
        \ldots, r_k \}$ be the series of residual vectors from system
        \cref{eqn:kktP}.  Then there is a polynomial $P_l(t)$ of
        order $l$ with $P_l(0)=1$ such that the projection of $r_k$ on
        $W_{k_0,l}$ satisfies
	\begin{equation}
		W_{k_0,l}^T r_k = P_l(T_{l,l})e_1=0.
	\end{equation}
\end{lemma}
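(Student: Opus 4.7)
The plan is to project the residual expansion from \Cref{lemma:polynomials} onto $W_{k_0,l}$, recognise the projection as a polynomial in $T_{l,l}$ acting on $e_1$, and then invoke the residual-orthogonality result of \Cref{thm:ortho} to conclude that this projection vanishes.

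With $k=k_0+l-1$, \Cref{lemma:polynomials} gives $r_k = \sum_{j=0}^{l}(A^TA)^j V_{k_0}\gamma^{(k)}_{j}$ with $\gamma^{(k)}_{j}\in\mathbb{R}^{k_0}$. Applying $W_{k_0,l}^T$ and using the Vorobyev-type relations in \cref{eqn:operator_B}, together with $B^m_{k_0,l}=W_{k_0,l}T_{l,l}^m W_{k_0,l}^T$ and the orthonormality $W_{k_0,l}^T W_{k_0,l}=I$, I obtain
\[
  W_{k_0,l}^T(A^TA)^j V_{k_0} \;=\; T_{l,l}^j\,W_{k_0,l}^T V_{k_0} \qquad\text{for every } j\in\{0,\ldots,l\},
\]
where the terminal $j=l$ case uses the projected form of \cref{eqn:operator_B}. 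I then identify $W_{k_0,l}^T V_{k_0}$ as the rank-one matrix $e_1 e_{k_0}^T$: the orthonormal bases $V_{k_0}=[v_1,\ldots,v_{k_0}]$ and $W_{k_0,l}=[v_{k_0},v_{k_0+1},\ldots,v_{k}]$ share only the vector $v_{k_0}$, and by \Cref{thm:ortho} the earlier columns $v_1,\ldots,v_{k_0-1}$ are orthogonal to every column of $W_{k_0,l}$.

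Combining these ingredients gives
\[
  W_{k_0,l}^T r_k \;=\; \sum_{j=0}^{l}\bigl(\gamma^{(k)}_j\bigr)_{k_0}\,T_{l,l}^j e_1 \;=\; P_l(T_{l,l})\,e_1,
\]
with $P_l(t):=\sum_{j=0}^{l}(\gamma^{(k)}_j)_{k_0}\,t^j$ a polynomial of degree at most $l$. The fact that this vector vanishes is then a second application of \Cref{thm:ortho}: since $W_{k_0,l}\subset\text{span}\{r_0,\ldots,r_{k-1}\}\subset\text{span}(V_k)$ and $r_k\perp V_k$, we have $W_{k_0,l}^T r_k = 0$, hence $P_l(T_{l,l})\,e_1=0$.

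The step I expect to be the main obstacle is the normalisation $P_l(0)=1$. Since $P_l(0)=(\gamma^{(k)}_0)_{k_0}$, one must argue that this scalar is nonzero, after which both sides of $P_l(T_{l,l})e_1=0$ can be rescaled freely. I would combine the orthogonality $V_{k_0}^T r_k=0$ (again from \Cref{thm:ortho}) with the expansion of \Cref{lemma:polynomials} to express $\gamma^{(k)}_0$ in terms of the higher-order coefficients, showing that $(\gamma^{(k)}_0)_{k_0}=0$ would force the expansion of $r_k$ to be a polynomial in $A^TA$ of strictly smaller degree, contradicting the definition of $k_0$ as the first iteration at which the Lagrange-multiplier span stabilises. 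Exactness of the degree (order exactly $l$) is argued analogously from the leading coefficient $(\gamma^{(k)}_l)_{k_0}$ being nonzero, which mirrors the familiar nondegeneracy hypothesis underlying Krylov residual polynomials.
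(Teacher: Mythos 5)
Your proposal is correct and follows essentially the same route as the paper: project the polynomial expansion of $r_k$ from \Cref{lemma:polynomials} onto $W_{k_0,l}$, use the relations \cref{eqn:operator_B} to replace $(A^TA)^j V_{k_0}$ by $T_{l,l}^j$ acting through $W_{k_0,l}^T V_{k_0}=e_1e_{k_0}^T$, and invoke $r_k\perp V_k$ to conclude the projection vanishes. Your additional concern about the normalisation $P_l(0)=1$ (i.e.\ that $(\gamma^{(k)}_0)_{k_0}\neq 0$) is legitimate and is in fact a point the paper's own proof passes over silently, so your sketch there is a refinement rather than a deviation.
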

%%%%% END LEMMA  %%%%%

%%%%% BEGIN PROOOF %%%% 
\begin{proof}
	Because $r_k \perp V_k$, we also have $r_k \perp W_{k_0,l}$, since $W_{k_0,l} \subset V_k$. We can then write for $k \ge k_0$,
	\begin{equation}
		\begin{aligned}
			0 = W_{k_0,l}^T r_{k} & = W^T_l \sum_{j=0}^{l} (A^TA)^j V_{k_0} \gamma^{(k)}_j  =  \sum_{j=0}^{l} W^T_{k_0,l} (A^TA)^j V_{k_0} \gamma^{(k)}_j                      \\
			                      & =   \sum_{j=0}^{l} W^T_{k_0,l} B_l^j V_{k_0} \gamma^{(k)}_j =    \sum_{j=0}^{l} W_{k_0,l}^TW_l T_{l,l}^j W_{k_0,l}^T V_{k_0} \gamma^{(k)}_j \\
			                      & =     \sum_{j=0}^{l} T_{l,l}^j (\gamma^{(k)}_j)_{k_0} e_1 = P_l(T_{l,l})e_1,
		\end{aligned}
	\end{equation}
	where we use \cref{eqn:operator_B}
\end{proof}

%%%%% BEGIN LEMMA  TRIDIAGONAL MATRIX
\begin{lemma}
	Let $k > k_0$,  assume \Cref{thm:conjecture} holds, and let $W_{k_0,l} = \vct\{v_{k_0},\ldots, v_k\}$. Then
	\begin{equation}
		W_{k_0,l}^T A^T A W_{k_0,l} = T_{l,l}
	\end{equation} is tridiagonal.
\end{lemma}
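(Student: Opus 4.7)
The plan is to recognise $T_{l,l}$ as the Jacobi (symmetric tridiagonal) matrix of a family of orthonormal polynomials, exactly as in the Vorobyev moment problem already invoked in the text. The equality $T_{l,l} = W_{k_0,l}^T A^TA W_{k_0,l}$ is immediate from the definition of the compressed operator $B_{k_0,l} = W_{k_0,l} T_{l,l} W_{k_0,l}^T$, so the substance is tridiagonality. Since $A^TA$ is symmetric, so is $T_{l,l}$, and hence the bilinear form $\langle p, q \rangle_T \coloneqq e_1^T p(T_{l,l})\, q(T_{l,l})\, e_1$ on $\mathbb{R}[t]$ is symmetric and positive semidefinite, with multiplication by $t$ self-adjoint with respect to it.

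Next, I would identify each standard basis vector $e_{m+1} \in \mathbb{R}^l$ as a polynomial image of $e_1$ under $T_{l,l}$. For $m \geq 1$, \Cref{lemma:polynomials} yields
\[
  r_{k_0+m-1} = \sum_{j=0}^{m} (A^TA)^j V_{k_0}\, \gamma_j^{(k_0+m-1)}.
\]
Projecting onto $W_{k_0,l}$, I would invoke $W_{k_0,l}^T V_{k_0} = e_1 e_{k_0}^T$ (the only shared column is $v_{k_0}$, by orthonormality of the $v_i$) together with the defining relations \cref{eqn:operator_B}, which inductively give $W_{k_0,l}^T (A^TA)^j V_{k_0} = T_{l,l}^j e_1 e_{k_0}^T$ for $j \leq l-1$. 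Pairwise orthogonality of the residuals (\Cref{thm:ortho}) simultaneously gives $W_{k_0,l}^T r_{k_0+m-1} = \|r_{k_0+m-1}\|\, e_{m+1}$. Combining and rescaling delivers $e_{m+1} = q_m(T_{l,l})\, e_1$ for a polynomial $q_m$ of degree at most $m$, and linear independence of the standard basis of $\mathbb{R}^l$ forces $\deg q_m = m$.

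The identity $\langle q_m, q_n \rangle_T = e_{m+1}^T e_{n+1} = \delta_{mn}$ then makes $\{q_m\}_{m=0}^{l-1}$ an orthonormal family in $\langle \cdot, \cdot \rangle_T$. By the classical argument for orthogonal polynomials with self-adjoint multiplication, they obey a symmetric three-term recurrence $t\, q_m(t) = \alpha_m q_{m+1}(t) + \beta_m q_m(t) + \alpha_{m-1} q_{m-1}(t)$. Evaluating at $T_{l,l}$ and applying to $e_1$ translates this into $T_{l,l} e_{m+1} = \alpha_m e_{m+2} + \beta_m e_{m+1} + \alpha_{m-1} e_m$ (with the boundary convention $e_0 = 0$), so every column of $T_{l,l}$ is supported on three consecutive entries, which is tridiagonality.

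The main obstacle I anticipate is the projected moment identity $W_{k_0,l}^T (A^TA)^j V_{k_0} = T_{l,l}^j e_1 e_{k_0}^T$ used in the second step. This is the quantitative expression of \cref{eqn:operator_B} -- i.e.\ that the compression $B_{k_0,l}$ replicates powers of $A^TA$ on the starting block $V_{k_0}$ -- but making it precise requires an induction on $j$ and attention to the fact that only the $k_0$-th entry of each $\gamma_j^{(k_0+m-1)}$ survives the projection, so that the right-hand side is genuinely a scalar polynomial in $T_{l,l}$ rather than a matrix-valued object. Once this ingredient is in place, the rest is the standard Lanczos / orthogonal-polynomial machinery.
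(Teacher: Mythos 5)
Your reduction to the standard Lanczos/orthogonal-polynomial argument hinges entirely on the projected moment identity $W_{k_0,l}^T (A^TA)^j V_{k_0} = T_{l,l}^j e_1 e_{k_0}^T$, and you rightly flag it as the main obstacle --- but the problem is not the bookkeeping of the induction; the identity is false in this setting. Already for $j=1$ the columns with index $i<k_0$ need not vanish: the $(p,i)$ entry of $W_{k_0,l}^TA^TAV_{k_0}$ is $v_{k_0+p-1}^TA^TAv_i$, and since the early basis vectors $v_1,\dots,v_{k_0-1}$ come from residuals that carry Lagrange-multiplier contributions, $A^TAv_i$ has no reason to stay inside $\vct\{v_1,\dots,v_{i+1}\}$; Euclidean orthogonality of the residuals (\Cref{thm:ortho}) does not give $A^TA$-orthogonality between the pre-asymptotic block and $W_{k_0,l}$. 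More fatally, for $j\ge 2$ the identity fails even on the last column: writing $A^TAv_{k_0} = W_{k_0,l}T_{l,l}e_1 + c + s$ with $c\in\vct\{v_1,\dots,v_{k_0-1}\}$ and $s\perp V_k$, a second application of $A^TA$ followed by projection picks up $W_{k_0,l}^TA^TA(c+s)\neq 0$ in general, so $W_{k_0,l}^T(A^TA)^2v_{k_0}\neq T_{l,l}^2e_1$. The subspace $W_{k_0,l}$ is not a Krylov space of $A^TA$ generated by $v_{k_0}$, so applying $A^TA$ repeatedly does not commute with projecting onto $W_{k_0,l}$; consequently $e_{m+1}$ is not a scalar polynomial in $T_{l,l}$ applied to $e_1$, and the Jacobi-matrix machinery never gets off the ground. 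Note also that the existence of exact-degree polynomials $q_m$ with $e_{m+1}=q_m(T_{l,l})e_1$ is, given symmetry, essentially equivalent to unreduced tridiagonality, so this one step is carrying the entire burden of the lemma.

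The paper's proof avoids iterated powers altogether. By \Cref{lemma:polynomials}, each residual $r_{k_0+t}$ is a degree-$(t{+}1)$ polynomial in $A^TA$ applied to $V_{k_0}$, so a \emph{single} application of $A^TA$ raises the degree by one and lands in $\vct\{V_{k_0}, r_{k_0-1},\dots,r_{k_0+t+1}\}$. One projection onto $W_{k_0,l}$ then annihilates the $\vct\{v_1,\dots,v_{k_0-1}\}$ component --- here plain Euclidean orthogonality suffices because the projection is applied only once, at the very end --- yielding an upper Hessenberg matrix $W_{k_0,l}^TA^TAW_{k_0,l}$, which is symmetric and therefore tridiagonal. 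If you wish to retain the orthogonal-polynomial viewpoint you would first have to establish this Hessenberg (degree-raising) relation anyway, at which point the three-term recurrence is already in hand and your detour through $\langle\cdot,\cdot\rangle_T$ becomes redundant.
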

%%%%% END LEMMA TRIDIAGONAL MATRIx

%%%% BEGIN PROOF %%%%%%%%
\begin{proof}
	The subspace $W_{k_0,l}$ is spanned by the residuals
        $\text{span}\{ r_{k_0-1}, \ldots, r_{k_0+l-2}\}$, which is
        equivalent to $\text{span}\{v_{k_0},\ldots, v_{k_0 +l-1}\}$.
        Based on \Cref{lemma:polynomials}, each of these can be
        written as
	\begin{equation}
		v_{k_0+t+1}\|r_{k_0+t}\|=  r_{k_0+t} = \sum^{t+1}_{j=0} (A^T A)^j V_{k_0} \gamma^{(k_0+t)}_j,
	\end{equation}
	for $t \in \{0,\ldots, l-2\}$.

	When we apply $A^T A$ to these residuals, we have
	\begin{equation}
		A^T A r_{k_0+t} = \sum^{t+1}_{j=0} (A^T A)^{j+1} V_{k_0} \gamma^{(k_0+t)}_j  \in \text{span} \left\{V_{k_0}, r_{k_0-1}, \ldots,r_{k_0+t+1}  \right\}.
	\end{equation}
	Since $\text{span}\left\{r_{k_0-1}, \ldots,r_{k_0+t} \right\}
        = \text{span}\left\{v_{k_0},\ldots,v_{k_0+t+1}\right\}$ and
         $W_{k_0,l}$ is orthogonal (because it is a subset of
        $V_k$), we can write
	\begin{equation}
		A^TA W_{k_0,l}   = W_{k_0,l+1} H_{l+1,l},
	\end{equation}
	where $H_{l+1,l} \in \mathbb{R}^{(l+1)\times l}$ has a
        Hessenberg structure.  This structure naturally appears when
        the residuals are used as basis.  Now, if we project back on
        $W_{k_0,l}$ we get
	\begin{equation}
		W_{k_0,l}^T A^T A W_{k_0,l} = H_{l,l},
	\end{equation}
	so that $H_{l,l}$ is symmetric and therefore
        tridiagonal. We denote it by $T_{l,l}$.
\end{proof}
%%%% END PROOF %%%%%

%%%% LEMMA RITZ VALUES ARE ZEROS of the polynomials
\begin{lemma}
	\label{thm:ritzvalue}
	The polynomial $P_l(t)$ has zeros at the Ritz values $\theta_j^{(l)}$ of $T_{l,l}$.
\end{lemma}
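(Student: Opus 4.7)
The plan is to combine the two preceding lemmas: from the lemma that gave $W_{k_0,l}^T r_k = P_l(T_{l,l}) e_1 = 0$, and from the lemma showing $T_{l,l}$ is symmetric tridiagonal, I can read off the roots of $P_l$ by diagonalizing $T_{l,l}$.

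First I would write the symmetric tridiagonal matrix in its spectral decomposition, $T_{l,l} = Q \Lambda Q^T$, where $\Lambda = \mathrm{diag}(\theta_1^{(l)}, \ldots, \theta_l^{(l)})$ collects the Ritz values (which are the eigenvalues of the $l\times l$ projected matrix $T_{l,l} = W_{k_0,l}^T A^T A W_{k_0,l}$). Then $P_l(T_{l,l}) = Q\, P_l(\Lambda)\, Q^T$, so the relation $P_l(T_{l,l})e_1 = 0$ becomes
\begin{equation*}
P_l(\Lambda)\, Q^T e_1 = 0.
\end{equation*}
Writing $v := Q^T e_1 \in \mathbb{R}^l$, this gives the componentwise condition $P_l(\theta_j^{(l)})\, v_j = 0$ for every $j$.

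Next I would argue that $v_j \neq 0$ for every $j$. This is the standard unreduced-tridiagonal argument: the columns of $W_{k_0,l}$ are the orthonormalized residuals, which, by construction, form a Krylov-like basis generated by $A^TA$ starting from a vector with nonzero component in every eigenspace of the restricted operator (otherwise the basis would have collapsed before reaching dimension $l$, contradicting linear independence used in the tridiagonal lemma). Equivalently, the off-diagonal entries of $T_{l,l}$ are the normalization constants $\|r_{k_0+t}\|$, which are nonzero while the algorithm has not terminated; hence $T_{l,l}$ is unreduced, and it is a classical fact that every eigenvector of an unreduced symmetric tridiagonal matrix has a nonzero first component, i.e., $v_j = e_1^T q_j \neq 0$.

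Combining these two facts forces $P_l(\theta_j^{(l)}) = 0$ for every $j = 1,\ldots,l$. Since $P_l$ has degree exactly $l$ and $l$ distinct zeros (the eigenvalues of the unreduced symmetric tridiagonal $T_{l,l}$ are simple), these Ritz values are precisely the zeros of $P_l$, proving the claim. The main obstacle I anticipate is the rigorous justification that $T_{l,l}$ is unreduced and has simple spectrum in the present setting; this should follow from the orthonormality of the residual basis $W_{k_0,l}$ (inherited from $V_k$ via \Cref{thm:ortho}) together with the Hessenberg-to-tridiagonal reduction already exploited in the preceding lemma, which shows each subdiagonal entry equals the nonzero residual norm that was used to normalize $v_{k_0+t+1}$.
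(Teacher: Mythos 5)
Your proposal is correct and follows essentially the same route as the paper's own proof: both reduce the claim to the identity $P_l(T_{l,l})e_1=0$, expand $e_1$ in the eigenbasis of the symmetric tridiagonal $T_{l,l}$, and use the fact that an unreduced symmetric tridiagonal matrix has eigenvectors with nonzero first component (the paper proves this by the same contradiction argument you sketch, propagating $(y_j^{(l)},e_1)=0$ through the tridiagonal recurrence to force $y_j^{(l)}=0$). Your explicit remarks that the subdiagonal entries are the nonzero residual norms and that the Ritz values are therefore simple are only minor elaborations of what the paper leaves implicit.
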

%%%% END LEMMA %%%%

%%% BEGIN PROOF %%% 
\begin{proof}
	The proof follows section 5.3 of \cite{van2003iterative} or
        theorem 3.4.3 of \cite{liesenstrakos}.  The eigenvectors
        $y_j^{(l)}$ of tridiagonal $T_{l,l}$ span
        $\mathbb{R}^l$.  We can then write $e_1 = \sum_j \gamma_j
        y_j^{(l)}$.

	When we assume that $\gamma_j = (y_j^{(l)},e_1) =0$, we obtain
	\begin{equation}
		T_{l,l}y_j^{(l)} = \theta^{(l)}_j y_j^{(l)}.
	\end{equation}
	Since $(T_{l,l}y^{(l)}_j,e_1) = (y^{(l)}_j,T_{l,l}e_1) = 0$, this implies that $(y_j^{(i)},e_2)=0$.
	We can then see that $y_j^{(l)} \perp \{e_1,e_2,\ldots, e_l\}$. This leads to a contradiction.
	Hence, we get the linear system
	\begin{equation}
		0 = P_l(T_{l,l})e_1 = \sum_{j=1}^l \gamma_j P_l(T_{l,l}) y_j^{(l)} = \sum_{j=1}^l \gamma_j P_l(\theta_j^{(l)}) y_j^{(l)}.
	\end{equation}
	Since $\gamma_j$ are nonzero, the linear system determines
        the coefficients of $P_l(t)$ such that the Ritz values
        $\theta_j^{(l)}$ are zeros of $P_l(t)$.
\end{proof}
%%%% END PROOF %%% 
\begin{remark}
	Note that the objective of the minimization problem
        $f(V_ky_k^*)$ decreases monotonically. The objective of our
        initial guess $f(V_{k+1}[(y_k^*)^T,0]^T)$ is equal to the
        optimal objective of the previous iteration $f(V_ky_k^*)$. At
        worst, it will not make a step towards an improved
        solution. However, this does not imply that we remain stuck
        in this point. Bounds can be removed from the working set
        and the basis is expanded.
\end{remark}

As soon as the subspace spanned by the Lagrange multipliers is
invariant, it is clear which variables are bound by the lower and
upper bounds.  But this subspace might not be large enough to
accurately solve the full KKT system.  If the Lagrange multipliers at
iteration $k>k_0$ can be written as $-\lambda_k + \mu_k = V_{k_0}
d^{(k)}$ for some $d^{(k)} \in \mathbb{R}^{k_0}$,
\cref{eqn:kktP:stationarity} becomes
\begin{equation}
  V_k^T A^T (AV_k y_k -b)+ V_k^T(-\lambda_k + \mu_k) = V_k^T A^T (AV_ky_k -b) +
  V_k^T V_{k_0} d^{(k)} = 0. 
\end{equation}
Depending on the sign of $(V_{k_0} d^{(k)})_i$ for $i \in
\mathcal{W}_k$, the corresponding $x_i = (V_ky_k)_i$ is equal to either
$\ell_i$ or $u_i$.

Let $\mathcal{N} = \{1, \ldots, n \} \setminus \mathcal{A}$ be the
inactive set.  We can then write
\begin{equation}
   \begin{aligned}
     (AV_ky_k)_j &= \sum_{i=1}^n A_{ji}(V_ky_k)_i = \sum_{i\in
       \mathcal{N}} A_{ji}(V_ky_k)_i + \sum_{i\in \mathcal{A}}
     A_{ji}(V_ky_k)_i \\
     &= \sum_{i\in \mathcal{N}} A_{ji}(V_ky_k)_i +
     \sum_{i\in \mathcal{A}, (V_{k_0}d^{(k)})_i <0} A_{ji} l_i + \sum_{i\in
       \mathcal{A}, (V_{k_0}d^{(k)})_i >0} A_{ji} u_i \\
     &= \sum_{i\in   \mathcal{N}} A_{ji}(V_ky_k)_i + \tilde{b}\\
     &= \sum_{i\in   \mathcal{N}} A_{ji}(V_k)_{il}(y_k)_l + \tilde{b},
   \end{aligned}
\end{equation}
where we insert the respective lower and upper bounds in the second equation, depending on the sign of $d^{(k)}$  As a result, their effect can be summarized by a
new right-hand side $\tilde{b}$.  In fact, we have eliminated the
nonlinear complementarity conditions.  Using the notation $\tilde{A}
= {A}_{j \in\{1,2,\ldots, m\},i \in \mathcal{N}}$, we can rewrite the first equation from the KKT
conditions as
\begin{equation}
  \begin{aligned}
    V^T_k A^T \left(\tilde{A} \tilde{V}_k  y_k + \tilde{b}-b\right) + V_k^T V_{k_0} d^{(k)} = 0.
  \end{aligned}
\end{equation}
This corresponds to solving a linear system for $y_k \in \mathbb{R}^k$
and $d^{(k)} \in \mathbb{R}^{k_0}$.  The $y_k$ and corresponding $d_k$
determine the precise value of the Lagrange multipliers within the
subspace $V_{k_0}$

\subsection{Algorithm}
Until now, we have used $y_k \in \mathbb{R}^k$ to denote the solution
of the projected problem \cref{eqn:LSproj}. We now introduce inner
iterations to solve for the optimal $y_k$.  From now on, we use
$y_k^*$ for the optimal solution of subspace $V_k$ and $y_{k}$ for a
guess.  In a similar way, we use $\mathcal{W}_k^*$ to denote the optimal
working set and $\mathcal{W}_k$ for an intermediate guess.

In each outer iteration $k$, the projected optimization problem is
solved with a refined version of the active-set method for convex QP
\cite{nocedal1999numerical}. We call this refined method
QPAS.  The rationale behind the choice of this active-set method is
that it has warm-start capabilities, which greatly improve the
runtime of the algorithm.  This is discussed in detail in
\Cref{sec:warmStart}.

\Cref{alg:resBasis} describes the proposed method. The initial basis
is the normalized initial residual.  We then project the solution to
the subspace and solve the small constrained optimization problem with
a classical active-set method for convex QP.  We start with
$\mathcal{W}_0 = \emptyset$. For $V_1$, the initial guess is $y_1=0$,
and the working set $\mathcal{W}_1=\mathcal{W}_0$ is empty.  After we
have solved the projected problem, we calculate the residual and
expand the basis. From \Cref{thm:ortho}, the basis vector is
orthogonal to the previous ones.

We follow the same strategy as in \cite{vanroose2021krylov}, where a
simplex method solves the minimization of $\|Ax-b\|_1$ and
$\|Ax-b\|_\infty$ by creating a sequence of small projected problems.

%\begin{figure}
%	\includegraphics[width = \textwidth]{figures/res_error_full.eps}
%	\caption{Evolution of $\|r_k\|$ and $|V_ky_k-x^*|$}
%	\label{fig:fullEx}
%\end{figure}

\subsection{Stopping criteria}
It is essential to have a stopping criterion to stop the method and
return the solution.  One possibility is to look at the residual
$\|r_k\|_2$ and stop when it is small, see line 8 in
\Cref{alg:resBasis}. Intuitively, this makes sense because $r_k$ is a
measure of the distance to the solution that accounts for the bounds
using the Lagrange multipliers. Alternatively, but closely related,
we can use the loss of positive definiteness in $V^T_kA^TAV_k$. This
occurs when a basis vector is added that is linearly dependent on the
previous basis.
%\todo[inline]{Semi-convergence is al vermelden} 
The detection of this loss is relatively straightforward, because a
Cholesky factorization is used and updated (see \Cref{sec:chol} for details). If in our update the last diagonal element of the Cholesky factor becomes (nearly) zero, positive definiteness is lost and the method can be terminated. 
%Positive definiteness is necessary for Cholesky factorization, and
%thus our update fails.  \todo[inline]{Can you give more details how
%  you check if it fails. Is it a library the throws an error} 

In experiments, both methods seem to work fine, as long as the threshold
is not taken too small (because of rounding errors).
%, as can be seen in the residuals in \Cref{fig:maxConstr}
Using both of these methods in tandem is robust in our
experiments, as sometimes semi-convergence would appear. When
semi-convergence sets in, the residual would rise.

\section{Implementation}\label{sec:implementation}
\subsection{Warm-start}\label{sec:warmStart}
By using an active-set method for a convex QP problem for the inner
iterations, we can employ warm-starting.  Let $\mathcal{W}^*_k$ be the
optimal working set, corresponding to the solution $y_k^*$ of subspace
$V_k$, obtained from QPAS (see \Cref{alg:qpas}).  Next, in iteration
$k+1$, we expand the basis to $V_{k+1}$ and warm-start with the
previous solution, i.e., $y_{k+1}=\begin{bmatrix}
  (y_k^*)^T & 0 \end{bmatrix}^T$.  Because of this choice, the
previous working set $\mathcal{W}_k$ is a valid initial working set
for $\mathcal{W}_{k+1}$.

This approach yields a significant runtime improvement.  The method
would otherwise need $|\mathcal{W}_k|$ additional inner iterations for
every $k$. This can be clearly seen in \Cref{fig:warmStart}, where we
compare the number of QPAS inner iterations for each outer iteration
$k$.  It shows that once the active set is more or less discovered and
superlinear Krylov convergence sets in (see \Cref{sec:convergence}),
warm-starting brings significant benefit.

Take special notice of the last 50 iterations, where $\mathcal{W}_k$
remains unchanged. Without warm-starting, $\|\mathcal{W}_k\|$ inner iterations are required to construct $\mathcal{W}_k$ from scratch, for every outer iteration.

\begin{figure}
	\includegraphics[width = \textwidth]{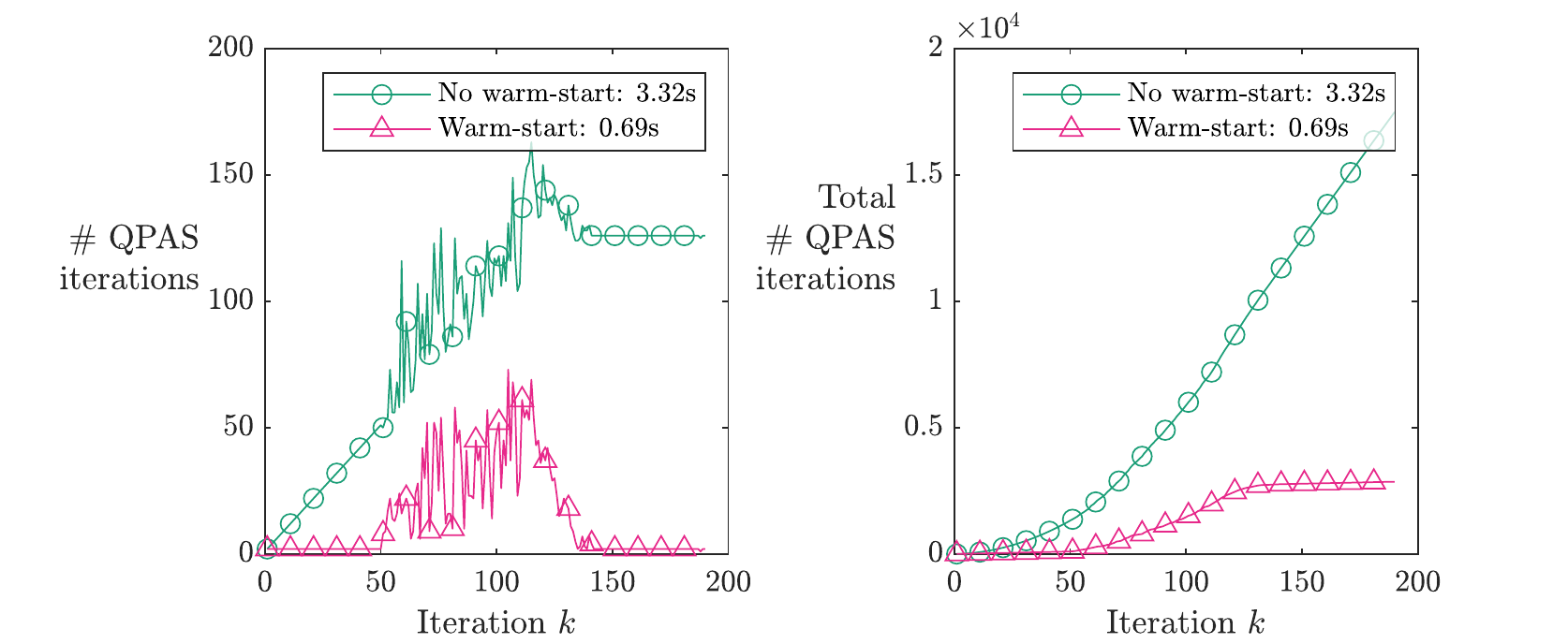}
	\caption{The number of inner iterations required in QPAS
          (\Cref{alg:qpas}) to reach optimality for a given subspace
          of dimension $k$.  When we use warm-starting where the
          working set of the previous optimal subspace is used as
          initial guess, the number of inner iterations is reduced.
        }
	\label{fig:warmStart}
\end{figure}

\subsection{Factorization update}\label{sec:factorization}

\subsubsection{Factorization of the Hessian using Cholesky}\label{sec:chol}
The only linear systems that we need to solve contain the current projected
Hessian $G = V_k^TA^TAV_k \in \mathbb{R}^{k\times k}$. Because it is
symmetric and positive definite, a natural choice is a Cholesky
decomposition. We factor $G$ as $LL^T$, where  $L$ a lower triangular
matrix. As mentioned before, $G$ will only lose its positive
definiteness when the columns of $V$ become linearly dependent (and we
thus have a solution). When this factorization (or its update) fails,
the algorithm can be terminated. This allows us to terminate properly
in cases where the residual does not shrink below the tolerance
(semi-convergence).

When the basis expands, $G$ becomes larger. If $G$ is expanded as
\begin{equation}
	\tilde{G} = \begin{bmatrix} G & c\\ c^T & \delta \end{bmatrix},
\end{equation}
with $c \in \mathbb{R}^k$ a column vector and $d \in \mathbb{R}$ a scalar, the Cholesky factorization can be updated as
\begin{equation} 
	\tilde{L} = \begin{bmatrix} L & 0\\ l^T & \bar{\delta} \end{bmatrix}
\end{equation}
with
\begin{subequations}
	\begin{align}
		Ll & = c,      \\
		\bar{\delta} & = \sqrt{\delta-l^Tl}.
	\end{align}
\end{subequations}

\begin{algorithm}
	\caption{Rank-1 update of lower Cholesky factorization}
	\label{alg:cholesky}
	\begin{algorithmic}[1]
		\Procedure{cholesky\_append}{$L\in \mathbb{R}^{k \times k}$, $c\in \mathbb{R}^k$, $\delta\in \mathbb{R}$}
		\State Solve $Ll = c$ \Comment Forward substitution
		\State $L \gets \begin{bmatrix}L & 0\\ l^T & \sqrt{\delta-l^Tl} \end{bmatrix}$
		\State\Return $L$
		\EndProcedure
	\end{algorithmic}
\end{algorithm}
\begin{remark}
The forward substitution in \lineref{alg:line:forwardBackward} can be
started before the Cholesky update has completed.
\end{remark}

\subsubsection{Factorization of the Hessian using Gram-Schmidt}
The current projected Hessian $G = V_k^TA^TAV_k \in \mathbb{R}^{k\times k}$
can also be factorized using the Gram-Schmidt process. We maintain an orthogonal
basis $U_k \in \mathbb{R}^{m\times k}$ such that $AV_k = U_k B_{kk}$,
where $B_{kk} \in \mathbb{R}^{k \times k}$ is an upper triangular
matrix. This approach is similar to bidiagonalization.  Each time a new normalized 
residual vector $v_{k+1}$ is added to the basis $V_k$, we compute 
$Av_{k+1}$ and make the resulting vector orthogonal to the columns of 
$U_k$. These projections are stored in $B_{kk}$.  Consequently, we have
\begin{equation}
  V_k^TA^TAV_k = B_{kk}^TU_k^T U_k B_{kk} = B_{kk}^T B_{kk}.
\end{equation}
This represents the Hessian as a product of a lower and upper
triangular matrix.  We observe that the matrix
$B_{kk}$ becomes a bidiagonal matrix once Krylov convergence sets
in. At this moment we do not exploit this bidiagonal asymptotic
structure. If we have a robust way to detect when the asymptotic phase
sets in it can be exploited and the link with Golub-Kahan
bidiagonalization can be made.  \Cref{alg:outer} and
\Cref{alg:outer_gs} show the outer loops with Cholesky and modified
Gram-Schmidt factorization, respectively.  Note that the $B_{kk}$, from
this section, and the Cholesky factor from \Cref{sec:chol} are each
others transpose.

\subsubsection{Solving the linear system in QPAS with QR}\label{sec:qr}
In the active-set method (see \Cref{alg:qpas}) a convex QP problem of the form
\begin{equation}\label{eqn:qp}
\min_x x^TGx + f^Tx \quad \text{s.t.} \quad Cx \leq d 
\end{equation}
is solved, where $G\in \mathbb{R}^{k\times k}$ is the convex Hessian,
$f\in \mathbb{R}^k$ the linear cost term, and $C \in
\mathbb{R}^{t\times k}$ is the matrix representing inequality
constraints.  We have limited ourselves to the least squares problem,
meaning $G$ is positive definite, as mentioned in \Cref{sec:chol}.

%\todo[inline]{This is more a discussion and should be moved: We believe that it
%is possible to extend the algorithm to allow for a general $G$ because
%of the bounding of $x$, see e.g.~\cite{gill1991inertia}.  However,
%this still needs proper testing and remains as possible future work.}

The algorithm solves a linear system of equations each iteration to
obtain the search direction and the Lagrange multipliers $\lambda_k$.
The linear system is given by
\begin{equation}
	\begin{pmatrix}
		G                 & C^T_{\mathcal{W}_k} \\
		C_{\mathcal{W}_k} & 0
	\end{pmatrix}
	\begin{pmatrix}
		-p_k \\
		\lambda_k
	\end{pmatrix}
	= \begin{pmatrix}
		Gx_k+f \\
		0
	\end{pmatrix},
\end{equation}
where $C_{\mathcal{W}_k}$ is the matrix with rows of $C$ from
$\mathcal{W}_k$, and $x_k$ is the current approximation of \Cref{eqn:qp}
as defined in \Cref{alg:qpas}.  Block elimination leads to
\begin{equation}
	C_{\mathcal{W}_k}G^{-1}C^T_{\mathcal{W}_k}\lambda_k = C_{\mathcal{W}_k}(x_k + G^{-1}f) =:q.\label{eqn:lambdaSystem}
\end{equation}
Additionally, from \Cref{sec:chol} we have a Cholesky-factorization of
$G=LL^T$. By computing the QR-factorization
\begin{equation} \label{eq:qr}
QR = L^{-1}C^T_{\mathcal{W}_k},
\end{equation}
we can rewrite \Cref{eqn:lambdaSystem} as
\begin{equation}
R^TR\lambda_k = C_{\mathcal{W}_k}(x_k + G^{-1}f).
\end{equation}

During each iteration, the working set $\mathcal{W}_k$ is expanded by
one index, reduced by one index, or remains unchanged. The QR
factorization of $L^{-1}C^T_{\mathcal{W}_k}$ is updated
accordingly with routines that add or remove
rows and columns.
\cite{hammarling2008updating}.

\subsection{Limiting the inner iterations}\label{sec:limittingInner}
In the early iterations, it is often better to expand the basis first
and then find the optimal coefficients. Indeed, a small basis cannot
fully represent the solution.  This suggests to limit the number of
inner iterations and calculate the residual when this upper bound is
reached. In principle, we only require optimality at the final
subspace.  Limiting the number of inner iterations requires care. If
we stop the inner iteration without achieving feasibility, the
orthogonality between the residuals is lost, see
\Cref{thm:ortho_feasibility}.

As shown in \Cref{fig:limitInner}, which shows the number of inner
iterations required to find the optimal solution for subspace $V_k$,
there is a sweet spot for the maximal number of inner iterations.  If
the number of inner iterations is high, we find the optimal solution
in each intermediate subspace and we converge with the smallest basis.
However, each outer iteration takes longer because of the large number
of inner iterations.  Especially at the halfway point, it appears to
be not useful to solve the projected problem exactly. An approximate
(feasible) solution is sufficient up to a point.  If the number of
inner iterations is limited too harshly, additional basis vectors may
be needed, leading to a larger subspace, which slows the convergence of
the residuals. In this example, the optimal number of inner iterations
appears to be 5, resulting in a 33\% speed-up over not limiting the inner iterations.

Note that there is no effect on the orthogonality of the residuals if
we stop with a feasible solution. This can be guaranteed if we stop
the inner QPAS iterations when the maximum number of inner iteration
is reached \textit{and} the QPAS search direction has zero length
(i.e., $p_k=0$ in \Cref{alg:qpas}).

%The tradeoff in this case is a loss of orthogonality. \Cref{thm:ortho}
%only holds if we solve each projected problem until optimality.  The
%fewer inner iterations are performed, the worse the loss of
%orthogonality, as can be seen in \Cref{fig:lossOrtho}.
\begin{figure}
	\includegraphics[width = \textwidth]{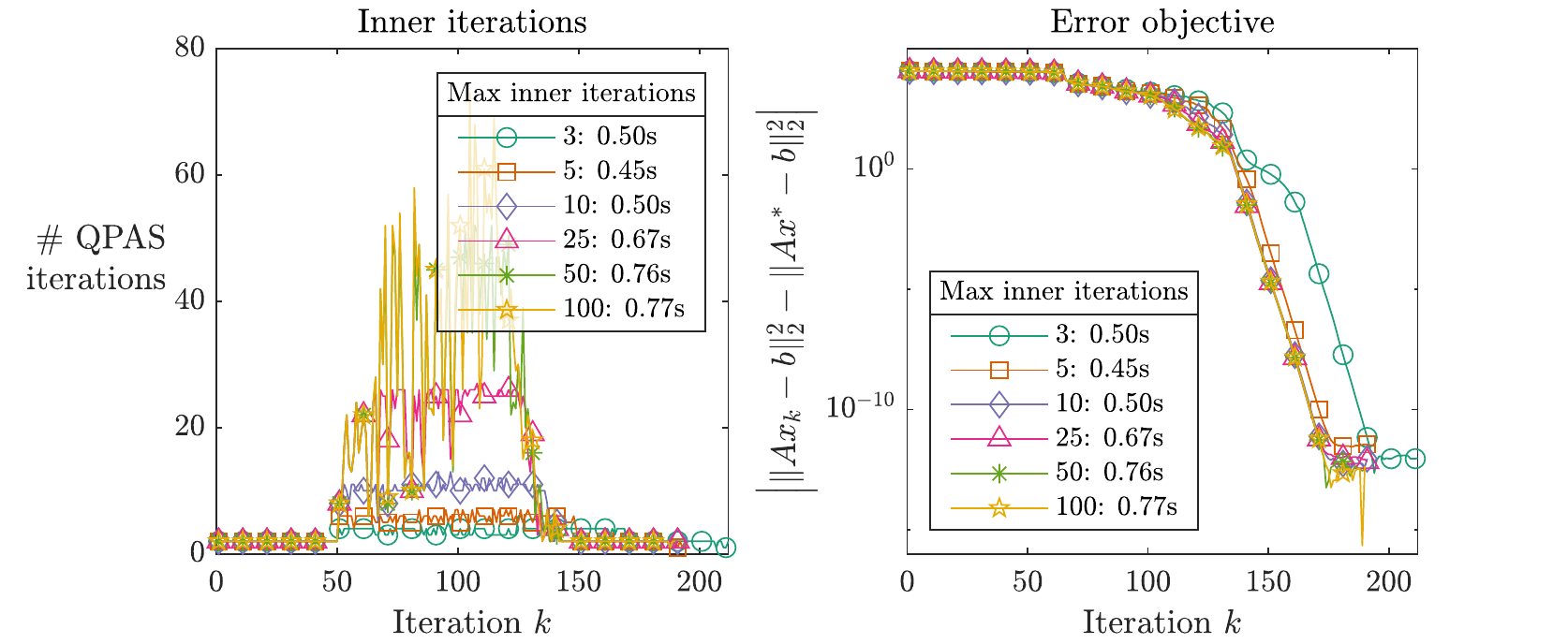}
	\caption{Limiting the number of inner QPAS iterations in
          \Cref{alg:resBasis} has little effect on convergence but
          a significant effect on the time to solution.  At the left, we show
          the number of inner iterations for various choices of the
          upper limit. On the right, we display the effect on 
          convergence.  The time to solution is shown for each choice
          of the upper limit.
	\label{fig:limitInner}}
\end{figure}

%% \begin{figure}
%% 	\centering
%% 	\begin{subfigure}{.3\textwidth}
%% 		\centering
%% 		\includegraphics[width = .8\linewidth]{figures/lossOrtho5.pdf}
%% 		\caption{$i_{\max} = 5$}
%% 	\end{subfigure}
%% 	\begin{subfigure}{.3\textwidth}
%% 		\centering
%% 		\includegraphics[width = .8\linewidth]{figures/lossOrtho10.pdf}
%% 		\caption{$i_{\max} = 10$}
%% 	\end{subfigure}
%% 	\begin{subfigure}{.3\textwidth}
%% 		\centering
%% 		\includegraphics[width = .8\linewidth]{figures/lossOrtho25.pdf}
%% 		\caption{$i_{\max} = 25$}
%% 	\end{subfigure}
%% 	\begin{subfigure}{.3\textwidth}
%% 		\centering
%% 		\includegraphics[width = .8\linewidth]{figures/lossOrtho50.pdf}
%% 		\caption{$i_{\max} = 50$}
%% 	\end{subfigure}
%% 	\begin{subfigure}{.3\textwidth}
%% 		\centering
%% 		\includegraphics[width = .8\linewidth]{figures/lossOrtho100.pdf}
%% 		\caption{$i_{\max} = 100$}
%% 	\end{subfigure}
%% 	\caption{This figure shows the sparsity pattern of $V_k^T V_k$ for various choices of maximal number of inner iterations $(i_{\max})$. When we limit the number of inner iterations, the residuals 
%% 	are not necessary orthogonal for some of the iteration. When the limit is large, e.g. 100, most of vectors are orthogonal.}
%% 	\label{fig:lossOrtho}
%% \end{figure}

The final algorithm with all improvements is written in \Cref{alg:outer} and \Cref{alg:qpas}

\begin{algorithm}
	\caption{Outer loop with Cholesky}
	\label{alg:outer}
	\begin{algorithmic}[1]
		\Procedure{ResQPASS}{$A$, $b$, $\ell$, $u$, $\epsilon_1$, $\epsilon_2$, {\tt maxit}}
		\State $r_0 = -A^Tb$
		\State $V_1 = \begin{bmatrix} r_0/\|r_0\| \end{bmatrix}$
		\State $L = \|AV_1\|$ \Comment{\Call{lower\_cholesky}{$V_1^TA^TAV_1$}}
		\State $\mathcal{W}_0 = \emptyset$
		\State $y_0 = []$
		\For{$i = 1,2,\ldots$}
		\State $\left[y_i,\mathcal{W}_i,[\lambda_i;\mu_i]\right] \gets$ \\ \hspace{2cm}\Call{qpas}{$L, -b^TAV_i, [-V^T, V^T]^T, [\ell^T, u^T]^T, [y_{i-1}^T, 0]^T, \mathcal{W}_{i-1}$, {\tt maxit}}
		\State $r_i = A^T(AV_iy_i-b) - \lambda_i + \mu_i$
		\State $v_{i+1} = r_i/\|r_i\|$
		\State $V_{i+1} = \begin{bmatrix} V_i & v_{i+1}\end{bmatrix}$
		\If{$\|r_i\|<\epsilon_1$}
		\State\Return $V_iy_i$
		\EndIf
		\State $c \gets V_i^TA^TAv_{i+1}$
		\State $d \gets v_{i+1}^TA^TAv_{i+1}$
		\State $L\gets$ \Call{cholesky\_append}{$L,c,d$} \label{alg:outer:cholAppend} using algorithm \ref{alg:cholesky}
		\If{$|L|<\epsilon_2$}\Comment{$V_i^TA^TAV_i$ no longer pos. def.}
		\State\Return $V_iy_i$
		\EndIf
		\EndFor
		\EndProcedure
	\end{algorithmic}
\end{algorithm}

\begin{algorithm}
	\caption{Outer loop with modified Gram-Schmidt}
	\label{alg:outer_gs}
	\begin{algorithmic}[1]
		\Procedure{ResQPASS}{$A$, $b$, $\ell$, $u$, $\epsilon_1$, $\epsilon_2$, {\tt maxit}}
		\State $r_0 = -A^Tb$
		\State $V_1 = \begin{bmatrix} r_0/\|r_0\| \end{bmatrix}$
		\State $\mathcal{W}_0 = \emptyset$
		\State $y_0 = []$
		\For{$i = 1,2,\ldots$}
                    \State $u_{i} = Av_{i}$
                    \For{$k=1,2,\ldots, i\!-\!1$}
                       \State $B_{ki} = u_k^Tu_i$
                       \State $u_i = u_i- u_k B_{ki}$
                    \EndFor
                    \State $B_{ii} = \|u_i\|$
                    \If {$i=1$}
                    \State $U_1 = \begin{bmatrix} u_i/B_{ii} \end{bmatrix}$
                    \Else
                    \State $U_{i} = \begin{bmatrix} U_{i-1} & u_{i}/B_{ii} \end{bmatrix}$
                    \EndIf
		    \If{$|B_{ii}|<\epsilon_2$}\Comment{$V_i^TA^TAV_i$ no longer pos. def.}
		       \State\Return $V_iy_i$
		    \EndIf

		\State $\left[y_i,\mathcal{W}_i,[\lambda_i;\mu_i]\right] \gets$ \\ \hspace{2cm}\Call{qpas}{$B^T, -b^TAV_i, [-V^T, V^T]^T, [\ell^T, u^T]^T, [y_{i-1}^T, 0]^T, \mathcal{W}_{i-1}$, {\tt maxit}}
		\State $r_i = A^T(AV_iy_i-b) - \lambda_i + \mu_i$
		\State $v_{i+1} = r_i/\|r_i\|$
		\State $V_{i+1} = \begin{bmatrix} V_i & v_{i+1}\end{bmatrix}$
		\If{$\|r_i\|<\epsilon_1$}
		\State\Return $V_iy_i$
		\EndIf
		\EndFor
		\EndProcedure
	\end{algorithmic}
\end{algorithm}

\begin{algorithm}
	\caption{quadratic programming active-set (QPAS) with lower triangular factor of Hessian and QR updates 
}
	\label{alg:qpas}
	\begin{algorithmic}[1]
		\Procedure{qpas}{$L,f,C,d,x_0,\mathcal{W}_0, \tt{maxit}$} 
%\todo[inline]{Moeten we hier niet de \tt{maxit} meegeven}
		%\State $L \gets \Call{lower\_cholesky}{G}$
%		\Comment{$G=LL^T$}
		\State $\mathcal{I} = \{1,\ldots,t\}$
		\State\label{alg:line:forwardBackward}Solve $LL^Tz=f$ \Comment{$z:=G^{-1}f$}
		\For{$k=0,1,2\ldots$}
		\State {Compute $p_k,\lambda_k$ and helper variables for equality constrained subproblem:}
		\If{$|\mathcal{W}_k|=0$}
		\State $p_k \gets -(x_k + z)$    \Comment{$p_k= - \left( x_k+\left(G^{-1}f\right)\right)$}
		\State $\lambda, X,Y, q \gets \emptyset$
		\Else
		\If{$k=1$} \Comment{Warm-start}
		\State Solve: $LX=C^T_{\mathcal{W}_k}$     \Comment{$X:=L^{-1}C^T_{\mathcal{W}_k}$}
		\State Solve: $L^TY=X$ \Comment{$Y:=G^{-1}C^T_{\mathcal{W}_k}$}
		\State $[Q,R] \gets \textsc{qr}\left(X\right)$
		\State\label{alg:line:qpas_q} $q \gets C_{\mathcal{W}_k}\left( x_k + z \right)$
		\EndIf
		\State Solve: $R^TR\lambda_k = q$  \Comment{$C_{\mathcal{W}_k}G^{-1}C^T_{\mathcal{W}_k}\lambda_k = C_{\mathcal{W}_k}(x_k + G^{-1}f)$}
		\State $p_k \gets - \left( x_k+z+ Y\lambda_k \right)$ \Comment{$p_k= - \left( x_k+\left(G^{-1}f\right)+ \left(G^{-1}C^T_{\mathcal{W}_k}\lambda_k \right) \right)$}
		\EndIf
		\State {Compute $x_{k+1}$ and update the working set $\mathcal{W}_{k+1}$:}
		\If{$p_k=0$} \Comment{Optimum reached or unnecessary bound}
		\If{$\forall i \in \mathcal{W}_k\cap\mathcal{I}: (\lambda_k)_i \geq 0 \textbf{ or } k\geq \tt{maxit}$}
                \State \Return $[x_k,\mathcal{W}_k,\lambda_k]$
		\Else
		\State $j\gets \argmin_{j\in\mathcal{W}_k\cap\mathcal{I}}\lambda_j$
		\State $\mathcal{W}_{k+1}\gets\mathcal{W}_k\setminus\{j\}$
		\State $ X \gets$ remove column $j$ in $X$  \Comment{Downdate $L^{-1}C^T_{\mathcal{W}_{k+1}}$}
		\State $ Y\gets$ remove column $j$ in $Y$   \Comment{Downdate $G^{-1}C^T_{\mathcal{W}_{k+1}}$}
		\State $[Q,R] \gets$ Update QR factors of $X$ with $j$ removed
		\State $q\gets$ Remove $q_j$ from $q$
		\State $x_{k+1}\gets x_k$
		\EndIf
		\Else \Comment{$\exists$ new blocking constraint}
		\State $\alpha_k \gets \min\left(1,\min_{i\not\in\mathcal{W}_k,\ C_ip_k<0}\frac{d_i-C_ix_k}{C_ip_k}\label{alg:line:alphak}
			\right)$
		\State $x_{k+1} \gets x_k + \alpha_kp_k$
		\If{$\exists j \in \mathcal{I}\setminus\mathcal{W}_k$ blocking constraint}
		\State $\mathcal{W}_{k+1} \gets \mathcal{W}_k \cup \{j\}$
		\State  $X \gets \begin{pmatrix} X & \delta x \end{pmatrix}$ with $L\delta x=C^T_j$  \Comment{Expand to $L^{-1}C^T_{\mathcal{W}_{k+1}}$}
		\State  $Y \gets \begin{pmatrix} Y & \delta y \end{pmatrix}$ with $L^T\!\delta y=\delta x$ \Comment{Expand to $G^{-1}C^T_{\mathcal{W}_{k+1}}$}
		\State $[Q,R]\gets$ Update QR factors of $X$ with column $L^{-1}C^T_j$ added
		\Else
		\State $\mathcal{W}_{k+1}\gets\mathcal{W}_k$
		\EndIf
		\EndIf
		\EndFor
		\EndProcedure
	\end{algorithmic}
\end{algorithm}

\subsection{Replacing  matrix-vector multiplication with a recurrence relation in QPAS}
When a nonzero step $p_k$ is computed, a step size $\alpha_k$ is
determined. The definition of $\alpha_k$ can be found in
\lineref{alg:line:alphak}, where $C_i$ is the $i$-th row of matrix
$C$.  Two matrix-vector products are computed in this step: $Cx_k$ and
$Cp_k$. However, the definition of $x_{k+1}=x_k+\alpha_kp_k$ implies
the possibility of a recursive relationship for $Cx_{k+1}$:
\begin{equation}
Cx_{k+1} = Cx_k + \alpha_k Cp_k.
\end{equation}
Here, $Cp_k$ was already computed to find $\alpha_k$, where it appears
in the denominator. We expect that using recursion reduces runtime, as
it avoids a matrix-vector product. This is similar to
CG \cite{liesenstrakos} and Pipelined CG \cite{ghysels2014hiding}.  As
in CG, it introduces additional rounding errors that have a slight
delay effect on the convergence \cite{cools2018analyzing}.

In \Cref{fig:timingInstable}, we compare the recursive case with 5 and
10 inner iterations to the direct case for a problem from
\Cref{ex:tuneable} with $A\in\mathbb{R}^{10,000\times 6,000}$ and
$i_{\max}=256$. The norm of the recursion error is also shown. As expected, the
direct method is the slowest. The case with 10 inner
iterations behaves exactly the same as the direct case, only 5\%
faster. The case with 5 inner iterations is slightly slower because of
the delayed convergence caused by the stringent bound on the number of inner
iterations; however, it is still faster than the direct method.

\begin{figure}
\includegraphics[width=\textwidth]{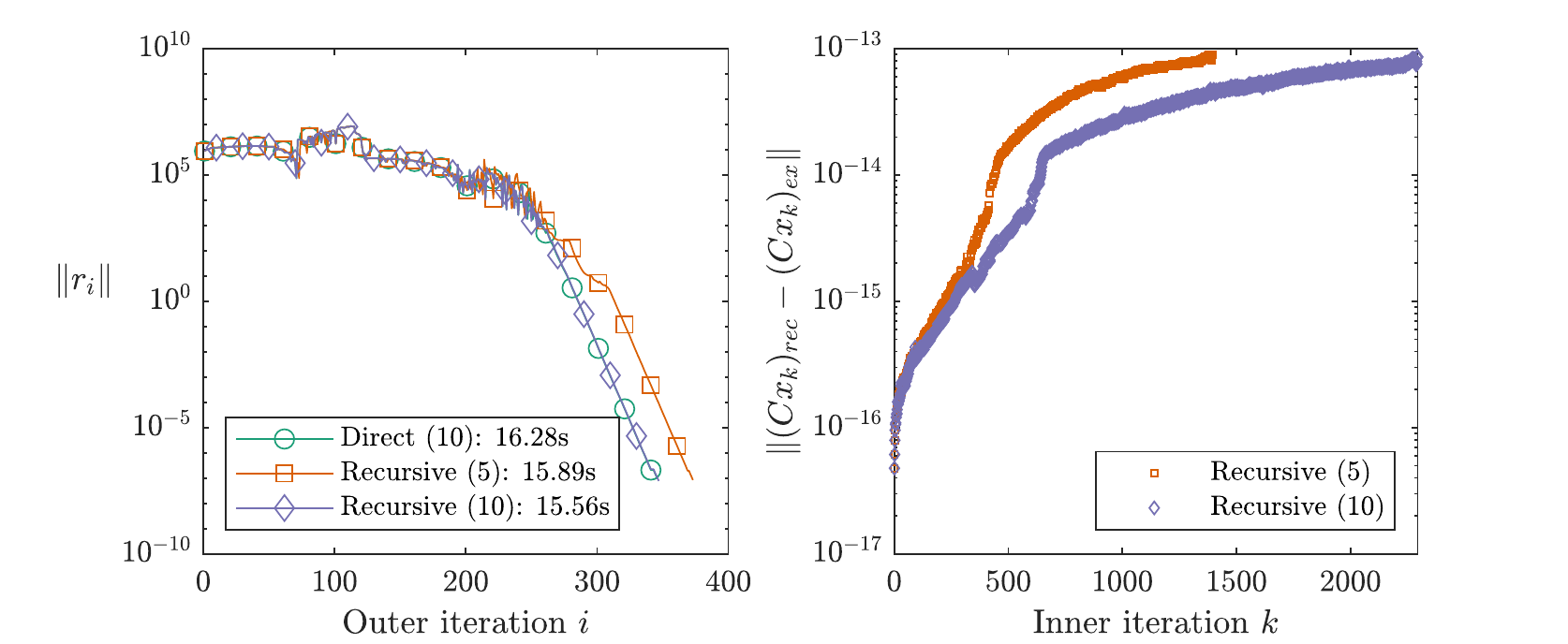}
\caption{ Comparison between the recursive version with an upper limit
  of 5 and 10 inner iterations and the direct version with 10 inner
  iterations. The difference between the exact solution $(Cx_k)_{ex}$
  and the recursively computed $(Cx_k)_{rec}$ remains low
  ($<10^{-13}$) during this experiment. There is no significant effect
  on the number of outer iterations but there is a measurable
  improvement in runtime.}
\label{fig:timingInstable}
\end{figure}

Note that a similar recurrence is used in CG for the residuals. It also
avoids a sparse-matrix multiplication, and its effect on the rounding
error is well studied, see, for example, Chapter 5 in \cite{liesenstrakos}.
% \subsection{Preconditioning}\label{sec:preconditioning}
% \todo[inline]{voorbeeld van preconditioning}
% In \Cref{thm:krylov} we proved that from a certain point onward, we get a tridiagonal Krylov structure. Krylov methods are known to gain significant speedups due to preconditioning.

% \section{Applications}
% \begin{itemize}
% 	\item CT
% 	\item Deblurring
% 	\item Example with under determined system + reasoning why it works.  See Van der vorst.
% \end{itemize}

% \subsection{Restarting}
% \todo[inline]{Still some issues in the code}
% \begin{enumerate}
% 	\item Reformulation
% 	\item Algorithm
% 	\item Remark about orthogonality
% \end{enumerate}

% \subsection{Fast discovery of the active set}
% \begin{enumerate}
% 	\item Interior point-like method
% \end{enumerate}

\section{Numerical experiments}\label{sec:numerical}
Our research is academic in nature and mainly explores the
possibilities that projections and Krylov-like methods still
offer. Therefore the problems are synthetic for now, to show that
ResQPASS is promising. Detailed error analysis and final attainable
accuracy are future work along with in-depth studies of specialised
applications.

\subsection{LSQR}
As mentioned in \Cref{ex:cg}, the unconstrained ResQPASS applied to
$\|Ax-b\|$ converges like LSQR, which is equivalent to CG applied to
$A^TAx=A^Tb$. For this experiment, the entries of
$A\in\mathbb{R}^{m\times n}$ and $x_{ex}\in\mathbb{R}^{n}$ are drawn
from a normal distribution with mean 0 and standard deviation 1. All
values smaller than 0 and larger than 0.1 are set to 0, resulting in a
sparse $A$ with $3.98\%$ fill. The right-hand side is defined as
$b=Ax_{ex}$. In our experiment, we consider both overdetermined and
underdetermined systems with an aspect ratio of 10:6. Note that the
matrix $A^TA$ is nearly a full matrix, particularly for large $A$. In
\Cref{fig:timingCG} we see that ResQPASS is slower than LSQR, which is
expected, as ResQPASS is more general and accounts for the bounds on
$x$. The small increase in runtime between LSQR and ResQPASS can be
interpreted as the overhead required for this generalization.

\begin{figure}
	\includegraphics[width=\textwidth]{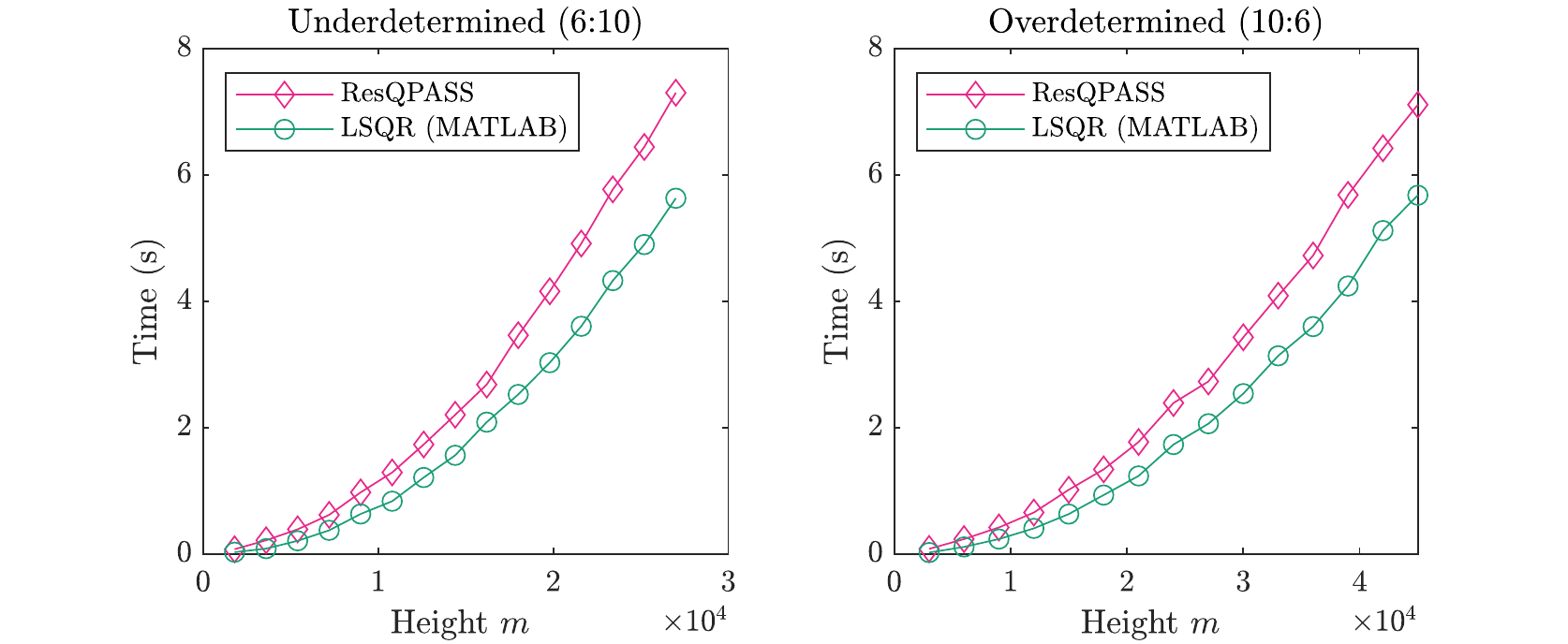}
	\caption{Comparison in runtime for varying sizes of $A$
          between ResQPASS and MATLAB's LSQR implementation
          \texttt{lsqr}. The difference in runtime can be interpreted
          as the overhead of dealing with bounds.
        }
\label{fig:timingCG}
\end{figure}

\subsection{Bounded Variable Least Squares (BVLS)}
In this experiment, we use the formulation of \Cref{ex:tuneable} with
$A\in \mathbb{R}^{10,000\times 6,000}$. The numerical improvements
made to the QPAS method already gives a significant advantage compared
to MATLAB's active-set solver and a basic QPAS
implementation.

Projection still appears to be beneficial, as ResQPASS is faster than
using QPAS from \Cref{alg:qpas} without projections.  In
\Cref{fig:timingTuneable}, it is evident that for this problem, which
involves a small number of active constraints, ResQPASS outperforms
the interior-point method (IP).  However, for sufficiently large
$i_{\max}$, ResQPASS would eventually be surpassed by IP (as is the case
with MATLAB's \texttt{quadprog} with \texttt{'active-set'}).  This is
because the runtime of interior-point methods is known to be
independent on the number of active constraints, whereas for ResQPASS
(and other active-set algorithms), it is dependent on the number of
active constraints \cite{nocedal1999numerical}.

\begin{figure}
\includegraphics[width = \textwidth]{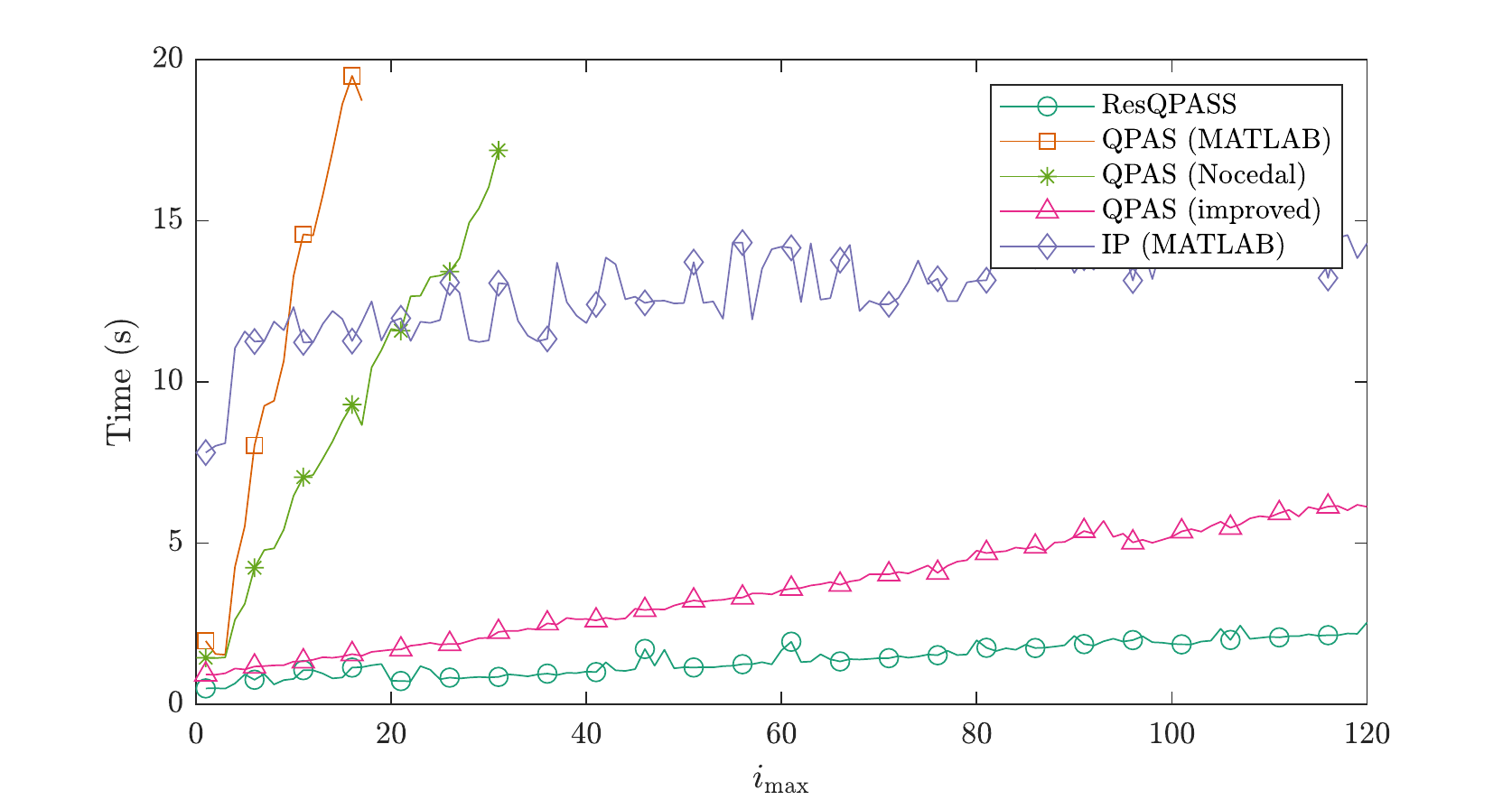}
\caption{Comparison in runtime between ResQPASS, MATLAB's
  \texttt{quadprog} with \texttt{'active-set'} implementation, the
  basic QPAS implementation described in \cite{nocedal1999numerical},
  \Cref{alg:qpas}, and MATLAB's interior point implementation (default \texttt{quadprog}) for
  different $i_{\max}$.}
\label{fig:timingTuneable}
\end{figure}

\subsection{Nonnegative Matrix Factorization (NMF)}
The $n\times m$ matrix $A$ is given, and we aim to find the unknown
matrices, with a given rank $p$, $X\in\mathbb{R}^{n\times p}$ and
$Y\in\mathbb{R}^{p\times m}$, with $X,Y\geq 0$ (elementwise), such
that $\|A-XY\|$ is minimized. This can be achieved with the
Alternating Least Squares (ALS) algorithm \cite{hastie2015matrix}. The
ALS algorithm requires solving nonnegative least squares problems, for
which ResQPASS can be applied. As mentioned in \Cref{rem:shift}, the
problem should be shifted such that 0 is feasible, as QPAS needs a
feasible initial guess.

Let the function $\VEC(X)$ be the vectorization of the matrix $X$,
where columns are stacked. We can rewrite $\|A-XY\|^2$ as
$\|\VEC(A)-(I_m\otimes X)\VEC(Y)\|^2$ and also as  $\|\VEC(A) - (Y^T\otimes
I_n)\VEC(X)\|^2$ to obtain a classical least squares formulation.

\begin{algorithm}
	\caption{NMF with ALS}
	\label{alg:ALS}
	\begin{algorithmic}[1]
		\Procedure{ALS}{A,p}
			\State $X = \text{rand}(n,p)$
			\For{$i\in\{1,\ldots,10\}$}
				\State $Y = {\displaystyle\argmin_{Y\geq 0}} \frac{1}{2}\|A-XY\|^2$
				\State $X = {\displaystyle\argmin_{X\geq 0}} \frac{1}{2}\|A-XY\|^2$
			\EndFor
		\EndProcedure
	\end{algorithmic}
\end{algorithm}

We use the synthetic example of \cite{gu2023fast}, where the matrix $A$
is constructed as follows. $X,Y$ are $n\times p$ and $p\times m$
matrices, respectively, with uniformly drawn random entries between 0
and 1. Gaussian noise with mean 0 and standard deviation 0.1 is added
to the product $XY$, and all negative values are set to 0 to obtain
$A$.

The number of ALS iterations is fixed to 10 (20 minimizations), and the
maximum number of inner QPAS iterations for ResQPASS is set to 10.  We
use a naive approach where the entire system is minimized at once
using either MATLAB's built-in nonnegative least squares solver
(\texttt{lsqnonneg)} or ResQPASS.  See \cite{gillis2020nonnegative}
and \cite{kim2011fast} for state-of-the-art implementations.  In
\Cref{fig:timingALS} we observe that ResQPASS offers a significant speed
advantage (one or two orders of magnitude) over \texttt{lsqnonneg},
  without sacrificing accuracy of the solution, for the naive approach.

\begin{figure}
\includegraphics[width = \textwidth,trim={1cm 8mm 1cm
    0},clip]{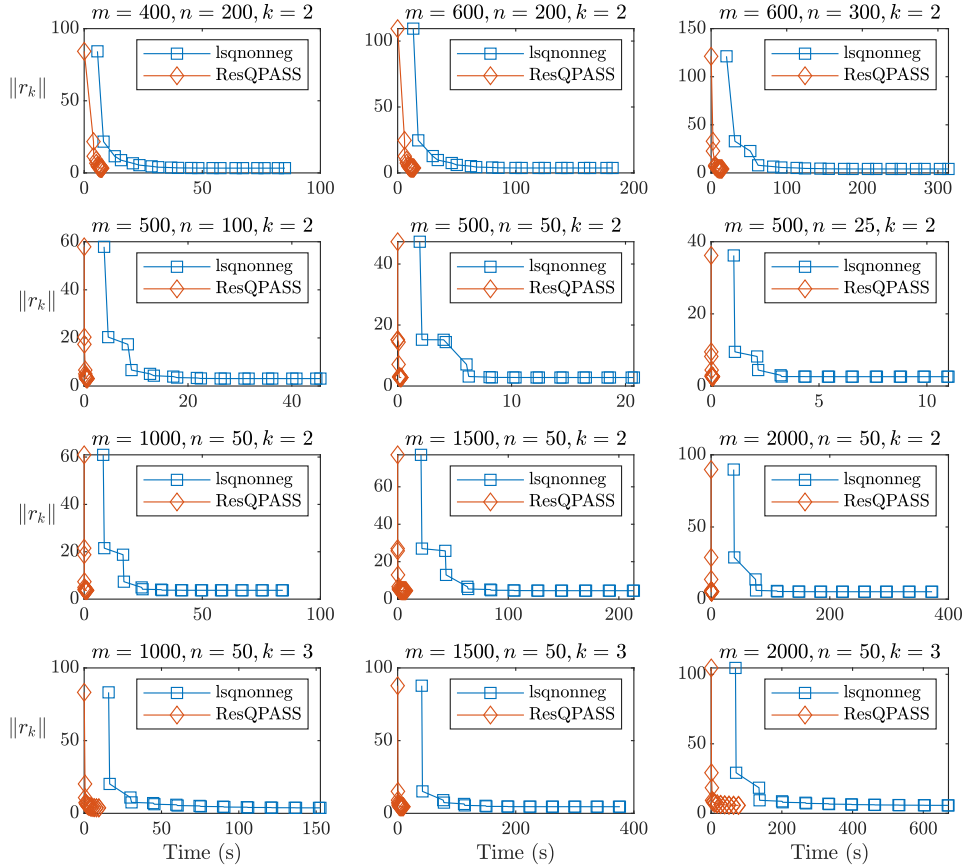}
\caption{Comparison between ALS implemented with either MATLAB's
  nonnegative least squares (\texttt{lsqnonneg}) or ResQPASS for varying sizes
  and 10 ALS iterations.}
\label{fig:timingALS}
\end{figure}

\subsection{Contact problem}  \label{example:contact}
In contact problems, the solution is constrained by spatial
limitations. Bounds restrict the solution space and we require a
subspace that is adapted to these conditions.  We study the
convergence of a simplified model for an inflated balloon with limited
space for inflation. In a balloon, the elastic forces are balanced by
the internal pressure. This creates an equilibrium between the
curvature and the magnitude of the pressure, meaning that the
Laplacian applied to the solution equals the pressure vector, which
remains constant throughout the domain. This leads to a system of
equations $Lx = p$, where $L \in\mathbb{R}^{n \times n}$ is the
Laplacian, $x \in \mathbb{R}^n$ is the solution that models the
surface deformation of the balloon, and $p \in \mathbb{R}^n$ is the
pressure vector.

If the solution is limited by lower and upper bounds, we have a bounded
variables QP problem. In this case we minimize the energy with bounds
on the solution:
\begin{equation} \label{eqn:contact_problem}
  \begin{aligned}
    \min_x \| L x - p\|^2_2 \quad s.t. \quad \ell \le x \le u.
  \end{aligned}
\end{equation}
For the simplified example, we use $\ell=0$ and $u=0.1$ and a constant
pressure vector.  The domain is $[0,1]^2$.

The matrix is the 3-point stencil approximation of the Laplacian. It
can be written as $A_\text{2D} = A_{1D} \otimes I + I \otimes A_{1D}
$, where $A_{1D}$ is the 1D finite-difference Laplacian matrix, a
tridiagonal matrix with constant diagonals.  We use a mesh with 50x50
grid points and a pressure vector with a constant value of 4.

In \Cref{fig:convergence_contact}, we show the convergence of ResQPASS
for the contact problem, both with and without preconditioning. The
solutions after 200 iterations are shown in
\Cref{fig:solution_contact}.

The asymptotic Krylov convergence is determined by the spectral
properties of $A^TA$.  It can be improved by an operator $M$ that is an
approximate inverse that is cheap to invert.  To illustrate this,
we use a preconditioning matrix that is based on an incomplete LU
factorization of the Hessian $A^TA$. We use a threshold of 0.1 for the
{\tt ilu} factorization.  Instead of using the residual \cref{eqn:rk}
we use a preconditioned residual, the solution of %\todo{?}
\begin{equation}
   M r_k =  A^T(AV_ky_k-b) - \lambda_k + \mu_k,
\end{equation}
where $M = L U$ is from the incomplete factorization.  When none
of the bounds is active, this corresponds to ILU preconditioned
CG.

The unpreconditioned iteration does not reach the bounds in the first
200 iterations and follows the CG convergence.  However, for the
preconditioned solution, we reach the bounds after 15 iterations.
From then on, the subspace is adapted to the bounds.  The
solution is adapted to these bounds, see left-hand side of
\Cref{fig:solution_contact}.

A more in depth study of various preconditioning techniques for
ResQPASS is left as future work.

\begin{figure}
%  \begin{tabular}{cc}
%    \includegraphics[width=0.45\textwidth]{figures/solution_preconditioned}&
%    \includegraphics[width=0.45\textwidth]{figures/solution_unpreconditioned}
%  \end{tabular}
	\includegraphics[width=\textwidth]{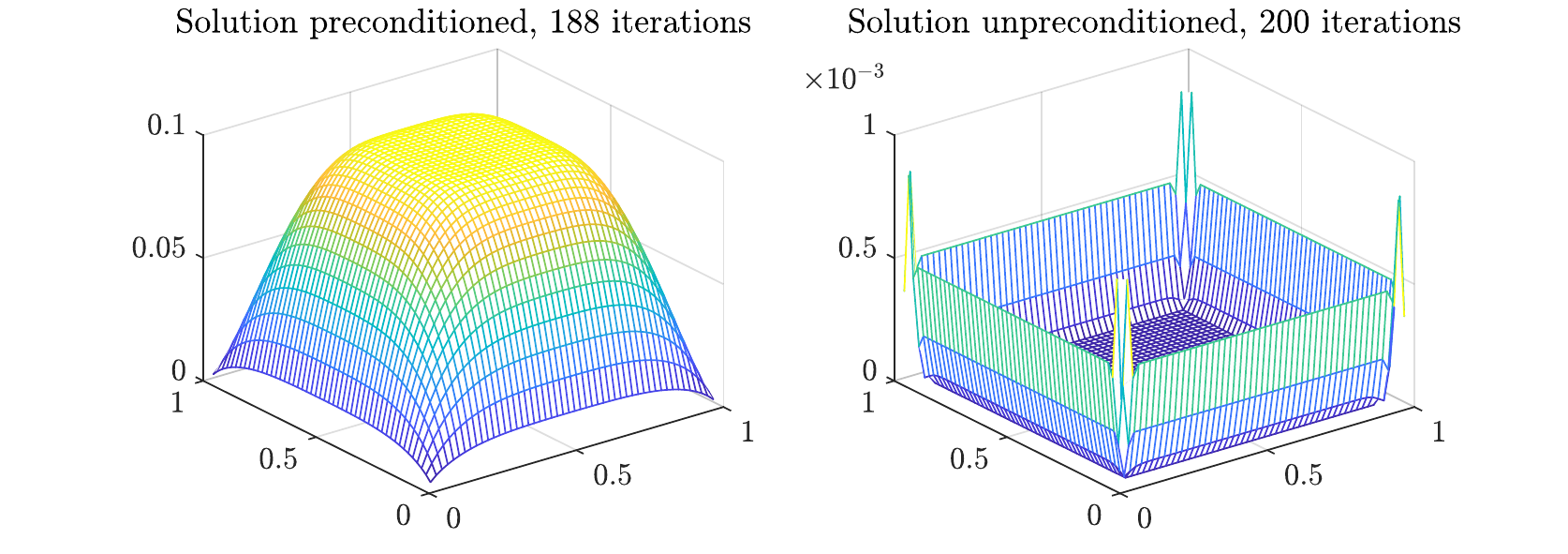}
  \caption{Solution after 200 iterations for the preconditioned and
    unpreconditioned problem \Cref{eqn:contact_problem}.  The
    unpreconditioned iteration converges very slowly and has not
    reached the bounds by iteration 200. However, the preconditioned
    iteration reaches the bounds starting from iteration 15. As
    explained earlier, the residuals that are used as a basis then
    start to include effects from the Lagrange multipliers and the
    solution adapts to the bounds. The preconditioned version reaches
    the desired tolerance before the maximum number of iterations is
    reached.}
\label{fig:solution_contact}
\end{figure}

\begin{figure}
%  \begin{center}
%    \includegraphics[width=0.9\textwidth]{figures/contact_problem_convergence}
%  \end{center}
\includegraphics[width=\textwidth]{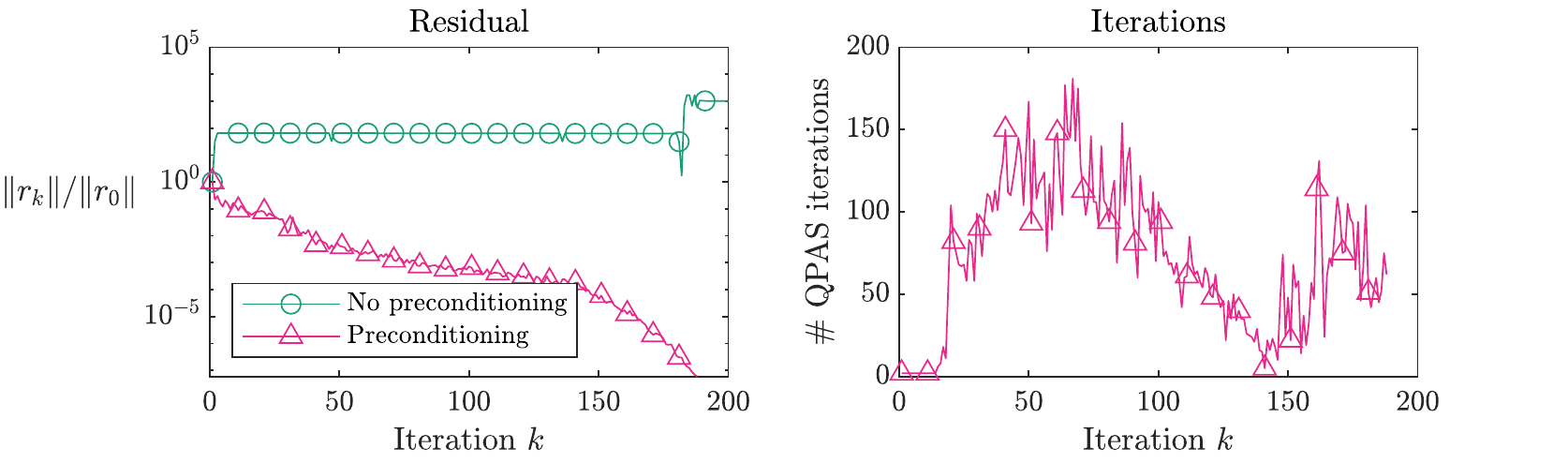}
  \caption{Left: The convergence for the contact problem
    \cref{eqn:contact_problem} for the ResQPASS algorithm, with and
    without ILU preconditioning for $A^T A$. Preconditioning
    significantly accelerates the convergence of the solver.\\ Right:
    We show the number of inner QPAS iterations for the preconditioned
    problem.  The first few iterations no bounds are reached, hence no
    inner iterations are required.  But then as the bounds become
    active a significant number of inner iterations is used.  Because
    of the preconditioning, the solution can change significantly from
    one solution to the next, hence the significant number of inner
    iterations.  }
\label{fig:convergence_contact}
\end{figure}
%
%\begin{figure}
%  \begin{center}
%    \includegraphics[width=0.8\textwidth]{figures/inner_its_contact}
%   \end{center}
%   \caption{We show the number of inner QPAS iterations for the
%     preconditioned problem.  The first few iterations no bounds are
%     reached, hence no inner iterations are required.  But then as the
%     bound become active a significant number of inner iterations is
%     used.  Because of the preconditioning, the solution can
%     significantly change from one solution to th next, hence the
%     significant number of inner iterations.  \todo[inline]{Bas can you
%       update the figure. Only preconditioned results}}
%\end{figure}

\section{Discussion and conclusion}\label{sec:conclusion}
We have presented ResQPASS, the \textit{residual quadratic programming active-set
  subspace method} for solving box-constrained linear least
squares problems with sparse matrices.  In this work, we propose the
subspace method, analyze the convergence and detail its efficient
implementation with warm-starting and factorization updates.

Similar to Krylov methods, the sequence of generated residual vectors is 
pairwise orthogonal. The residual vectors we use include the current
approximated Lagrange multipliers. The inner problem, a small QP
problem with dense matrices, only needs to be solved for feasibility
to obtain an orthogonal basis.

ResQPASS works quickly if the vector of Lagrange multipliers is
sparse, meaning that only a few box constraints are active and most
Lagrange multipliers are zero.  As soon as the active set is
identified, Krylov convergence occurs. We then converge rapidly to the
solution.

Additionally, Krylov convergence can be further accelerated with the
help of a preconditioner. In this case, we use residuals that are the solution
of {$Mr_k = A^T(Ax_k-b)-\lambda_k + \mu_k$} for some nonsingular
matrix $M$ that is cheap to solve.  The residuals are then
$M$-orthogonal.

However, there are many inverse problems where numerous variables
reach the bounds. Consider, for example, the deblurring of a satellite
picture against a black background.  In such cases, the Lagrange
multipliers are not sparse, but the deviation from the bounds on $x$
is sparse.  Therefore, a dual ResQPASS algorithm would be
helpful. Instead of expanding $x$ in the basis of residuals, we can
expand the Lagrange multipliers, which will lead to a sparse $x$.
Preliminary experiments confirm this intuition, and the dual algorithm
demonstrates similar superlinear convergence. The analysis of the dual
algorithm, where we use the residuals as basis to expand the Lagrange
multipliers, is the subject of a future paper.

Additional applications of this methodology can be found in contact
problems in mechanics, where the subproblems interact at only a few
points. These points of contact are often unknown upfront.

Future work is a thorough analysis of the propagation of
rounding errors and effects of the loss of orthogonality.

At present, we have not leveraged the asymptotic Krylov structure. If
a robust detection mechanism exists to determine whether we are in the
asymptotic regime, the Gram-Schmidt factorization of the Hessian could
be simplified to a bidiagonalization.

\section*{Acknowledgments}
We thank Jeffrey Cornelis for fruitful discussions during the initial
phase of the research.  Wim Vanroose acknowledges fruitful discussions
with Przemysław Kłosiewicz concerning an earlier version of QPAS
during earlier project (unpublished).  Bas Symoens acknowledges
financial support from the Department of Mathematics, U. Antwerpen.
We acknowledge the anonymous referees for constructive feedback and
the recommendation to apply QR as in \Cref{eq:qr}, instead of the product
of the right-hand side. \bibliographystyle{siamplain} \bibliography{biblio}
\end{document}